\journalname{myjournal}
\newcommand{\set}[2]{\left\{{#1}\,:~{#2}\right\}}
\newcommand {\average}[1] {\mbox{$\left\{\!\!\left\{ #1 \right\}\!\!\right\}$}}
\newcommand {\jump}[1] {\mbox{$\left[\!\left[ #1 \right]\!\right]$}}
\begin{document}

\title{Energy preserving model order reduction of the nonlinear Schr\"odinger equation}


\titlerunning{Energy preserving MOR of NLSE}        

\author{  B\"ulent Karas\"ozen       \and
        Murat Uzunca
}


\institute{   B\"ulent Karas\"ozen \at
              Institute of Applied Mathematics \& Department of Mathematics \\
              Middle East Technical University, Ankara-Turkey \\
              Tel.: +90 312 2105602\\
              Fax: +90 312 2102985\\
              \email{bulent@metu.edu.tr}
           \and
           Murat Uzunca \at
           Department of Mathematics \\
           Sinop University, Sinop-Turkey\\
           Tel.: +90 368 271 5516-4285\\
           Fax: + 90 368 271 5520\\
                         \email{muzunca@sinop.edu.tr}
}

\date{Received: date / Accepted: date}

\maketitle


\begin{abstract}
An energy preserving reduced order model is developed for two dimensional nonlinear Schr\"odinger equation (NLSE) with plane wave solutions and with an external potential. The NLSE is discretized in space by the symmetric interior penalty discontinuous Galerkin (SIPG) method. The resulting system of Hamiltonian ordinary differential equations are integrated in time by the energy preserving average vector field (AVF) method.   The mass and energy preserving reduced order model (ROM) is constructed by proper orthogonal decomposition (POD) Galerkin projection.  The nonlinearities are computed  for the ROM efficiently by discrete empirical interpolation method (DEIM) and dynamic mode decomposition (DMD). Preservation of the semi-discrete energy and mass are shown for the full order model (FOM) and for the ROM which ensures the long term stability of the solutions. Numerical simulations illustrate the preservation of the energy and mass in the reduced order model for the two dimensional NLSE with and without the  external potential. The POD-DMD makes a remarkable improvement in computational speed-up over the POD-DEIM. Both methods approximate accurately the FOM, whereas POD-DEIM is more accurate than the POD-DMD.

\keywords{Nonlinear Schr\"odinger equation \and Discontinuous Galerkin method \and Average vector field method \and Proper orthogonal decomposition \and Discrete empirical interpolation method \and Dynamic mode decomposition  }
 \subclass{MSC 65P10, 65M60, 365Q55,  37M15, 93A15}
\end{abstract}

\section{Introduction}
\label{intro}

The nonlinear Schr\"odinger equation (NLSE) is used frequently for modelling  wave propagation phenomena in different areas of  physics, chemistry and engineering.  In quantum physics and chemistry, the NLSE is used in the study of Bose-Einstein condensation (BEC), where it is also called  Gross-Pitaevskii equation (GPE). The numerical simulations are extremely  important for predicting the long term behavior of NLSE/GPE, because experiments to create BECs are challenging and computationally expensive.

The structure preserving symplectic and  multisymplectic \cite{Bridges06,Chen02,Islas01} integrators and energy preserving integrators  \cite{Celledoni12ped} using finite difference discretization in space, lead to long term stability of the solutions of Hamiltonian partial differential equations (PDEs) like the  NLSE. In \cite{Li15} local energy-preserving multi-symplectic algorithms in time are constructed for coupled and two dimensional  NLSE using pseudospectral methods
or Gauss-Legendre collocation methods for the spatial discretization.

 Due to the implicit nature of these integrators, at each time step, a coupled fully nonlinear system has to be solved by a fixed point or a Newton-Raphson method with high accuracy  \cite{Hairer10}. The discrete energy  is conserved when the nonlinear system is solved accurately  up to the machine precision. Therefore solving NLSE might be extremely time consuming, especially for two and three dimensional (2D and 3D) problems.

The goal of this work is fast and accurate reduced order modeling for the 2D  NLSE that preserves energy and mass. The proper orthogonal decomposition reduced order model (POD-ROM) has been widely used as a computationally efficient surrogate model in large-scale numerical simulations of complex systems. However, when it is applied to a Hamiltonian system like the NLSE, naive application of the POD method can destroy the Hamiltonian structure in the reduced order model. The stability of reduced models over long-time integration and the structure preserving properties has been recently investigated in the context of Lagrangian systems \cite{Lall03,Carlberg15}, and for Hamiltonian systems \cite{Beattie11,Chaturantabu16}. For linear and nonlinear Hamiltonian systems, a symplectic model reduction technique, proper symplectic decomposition (PSD) with symplectic Galerkin projection  is constructed in \cite{Peng16} which captures the symplectic structure of Hamiltonian systems to ensure long term  stability of the reduced model.  The PSD is computed using three algorithms; complex SVD, cotangent lift and nonlinear programming.  The PSD-DEIM method is extended with a greedy algorithm to parametric Hamiltonian systems in \cite{Hesthaven16}.  For nonlinear Hamiltonian systems, the PSD is combined with the discrete empirical interpolation method (DEIM)  to reduce the computational  cost \cite{Peng16,Hesthaven16}. Recently the energy preserving average vector field (AVF) method is used as a time integrator to construct reduced order models  for Hamiltonian systems \cite{Gong17}, for dissipative gradient systems like the Allen-Cahn equation \cite{Karasozen17}. In general, there exist no numerical integration method that preserves both the symplectic structure and the energy of a Hamiltonian systems.  The snapshots are enriched using the gradients of the Hamiltonian at time instances in \cite{Gong17}.  Besides the symplectic \cite{Hairer10} and multi-symplectic structure preserving geometric integrators \cite{Bridges06}, there are energy or Hamiltonian preserving integrators;  the average vector field (AVF) method \cite{Quispel08,Celledoni12ped}. The AVF is second order convergent in time, and preserves the time reversal symmetry of the Hamiltonian systems like the implicit mid-point rule.  Energy preservation of various PDEs by the AVF method is studied in  \cite{Celledoni12ped}: Hamiltonian and dissipative PDEs, like the NLSE, Korteweg-de  Vries equation, Sine-Gordon equation, Allen-Chan equation, Ginzburg Landau equation and Maxwell equation.

Currently the symplectic and energy preserving reduced order models are applied to one dimensional linear and non-linear Hamiltonian systems using finite differences in space; linear wave equation \cite{Peng16,Hesthaven16,Gong17}, NLSE  \cite{Peng16,Hesthaven16}, Sine-Gordon equation  \cite{Peng16}, Korteweg-de Vries equation \cite{Gong17}. The long term computation of the two dimensional NLSE with and without external potential is computationally challenging and more expensive than the one dimensional NLSE. In this paper  we  develop an energy stable reduced order model (ROM) for two dimensional NLSE, which inherits the energy preservation  property of the full order model (FOM).   For the space discretization we apply symmetric interior penalty discontinuous Galerkin (SIPG) method \cite{Arnold02,riviere08dgm}, and for time discretization the energy preserving AVF method. The SIPG is more efficient by evaluations of the nonlinear terms than the continuous FEM \cite{Karasozen15}. The AVF method preserves the energy dissipation of gradient systems like the Allen-Cahn equation \cite{Karasozen17a}  and the skew-gradient structure like the diffusive FitzHugh-Nagumo equation \cite{Karasozen15}. Reduced order models, which preserve the energy dissipation  of  gradient and skew-gradient systems are developed in  \cite{Karasozen15,Karasozen17}.  The POD-Galerkin approach  involves an offline-online splitting methodology. In the offline stage, the high fidelity or truth solutions are  generated by numerical simulations of the discretized high dimensional FOM. The POD is then  applied to compute an optimal subspace to fit the high fidelity data. A reduced system is constructed by projecting the high-dimensional system to this subspace. In the online stage, the reduced system is solved in the low-dimensional subspace. The primary challenge in producing the low dimensional models of the high dimensional discretized PDEs is the efficient evaluation of the nonlinearities (inner products) on the POD basis. Different POD Galerkin methods are developed to reduce the complexity of evaluating the nonlinear terms; gappy POD \cite{Everson95,Carlberg13}, missing point estimation \cite{Astrid08,Zimmermann16}, empirical and discrete empirical interpolation method (EIM/DEIM) \cite{Barrault04,chaturantabut10nmr}. Recently the dynamic mode decomposition (DMD) has appeared as an alternative to produce low rank approximation of the nonlinear terms  \cite{Alla16}. Like the DEIM, the DMD uses singular value decomposition (SVD) of the nonlinearities, which also uses sparse sampling through the gappy POD and DEIM/EIM. In \cite{Kutz13} a hybrid combination of the POD with DMD is applied to the one dimensional NLSE and to the Ginzburg-Landau equation.  Recently in \cite {Charnyi17}, for the incompressible Navier-Stokes equation (NSE), energy, mass and angular momentum conserving Galerkin discretization methods are developed in convective, skew-symmetric, conservative, and rotational formulations. The conservation properties of ROMs such as energy balance and mass conservation for the NLSE are investigated theoretically and numerically \cite{Mohebujjaman17}.

Efficient computation of the reduced order basis functions is an important aspect in reduced order modelling. The reduced order basis functions in POD-DEIM and POD-DMD are computed in the offline stage using singular value decomposition (SVD), which can be computationally demanding for large snapshot matrices. Here we use the randomized singular value decomposition (rSVD) \cite{Halko11a,Mahoney11,Martinsson16} as  a fast and accurate alternative to the deterministic SVD  to reduce the computational cost in the offline stage \cite{Alla16a,Bistrian17,Erichson16}. We compare the DEIM and DMD with respect to the accuracy and efficiency. We show that ROMs preserve well the energy and mass of the FOMs through numerical simulations for two dimensional NLSE with and without an external potential. It turns out that POD-DMD is several orders faster than the POD-DEIM, but less accurate than the POD-DEIM.

The remainder of the paper is organized as follows. The NLSE is reviewed briefly in Section~\ref{Sec:nlse}. Space-time discretization of NLSE by the SIPG and AVF method is given in Section~\ref{Sec:fom}. In Section~\ref{Sec:rom}, the energy preserving ROM, and also the POD-DEIM and POD-DMD are presented. The preservation of the invariants like the energy and mass of the NLSE, efficiency and accuracy of the ROMs are illustrated through the numerical simulations in Section~\ref{Sec:numeric}. The paper ends with some conclusions.

\section{Nonlinear Schr\"odinger equation}
\label{Sec:nlse}

The general form of the  NLSE equation with a cubic nonlinearity and an external potential $V$ is given  by

\begin{align}\label{p1}
 i \, \frac{\partial \psi(t,\bm{x})}{\partial t}  &=  \left [ - \alpha\Delta + \beta |\psi(t,\bm{x})|^2   +  V(\bm{x}) \right ]  \psi(t,\bm{x}), & (t,\bm{x})\in (0,T]\times \Omega,
\end{align}
with an initial condition

\begin{equation}
\psi(0,\bm{x} ) = \psi_0(\bm{x}), \quad \bm{x} \in \Omega,
\end{equation}
where $\psi(t,\bm{x})$ is the complex valued wave function,  $i$ is the complex unity ($i=\sqrt{-1}$), $\Delta = \nabla^2$ is the Laplace operator, $t$ is the time variable, $T>0$ is a  terminal time, $\alpha$ is a positive constant, and $\Omega \subset \mathbb{R}^d\; (d =1,2,3)$ is a bounded polygonal computational domain.  In this paper we consider two dimensional ($d=2$) NLSE, and we set the spatial variable $\mathbf{x} =(x,y)$. The parameter $\beta$ is a dimensionless constant with negative value for focusing (or attractive) and positive value for defocusing (or repulsive) nonlinearity. The external potential function $V(\mathbf{x})$ is a given real-valued function and its specific form depends on different applications \cite{Pitaevskii03,Sulem99}. For BEC, it is usually chosen as a harmonic confining potential, i.e., quadratic polynomial \cite{Antoine13,Bao13a}. The boundary conditions are usually of homogeneous Dirichlet or periodic type. We consider here periodic boundary conditions in the space time domain $(t,x,y) \in (0,T]\times [a,b] \times [c,d]$

\begin{align*}
\psi(t,a,y) &= \psi(t,b,y), \quad \forall (t,y) \in (0,T] \times [c,d],\\
\psi(t,x,c) &= \psi(t,x,d), \quad \forall (t,x) \in (0,T] \times [a,b].
\end{align*}

For the NLSE \eqref{p1}, we relate the mass (or density) functional $N(\psi(t,\bm{x}))$ and the energy functional $E(\psi(t,\bm{x}))$,  given as usual by

\begin{subequations}
\begin{align}
  N(\psi(t,\bm{x})) := &  \; \int_{\Omega}|\psi(t,\bm{x})|^2 \; d \bm{x},  \label{mass} \\
  E(\psi(t,\bm{x})) := & \; \frac{1}{2}\int_{\Omega} \left( \alpha |\nabla \psi(t,\bm{x})|^2 + V(\bm{x}) |\psi(t,\bm{x})|^2 + \frac{\beta}{2}|\psi(t,\bm{x})|^4 \right) \; d \bm{x}. \label{energy}
\end{align}
\end{subequations}
The NLSE \eqref{p1} is a Hamiltonian PDE which is well-known for Hamiltonian PDEs \cite{Sulem99,Bao13a} that it conserves the mass \eqref{mass} and the energy \eqref{energy} as the time progresses, in other words, the mass and energy at all times is equal to the mass and energy at the initial time $t=0$:
\begin{equation}
  N(\psi(t,\bm{x})) =  N(\psi_0(\bm{x})), \quad E(\psi(t,\bm{x})) =  E(\psi_0(\bm{x})), \quad \forall t\in (0,T],
\end{equation}
with the initial condition $\psi(0,{\mathbf x}) = \psi_0(\bm{x})$.

There are other important dynamical properties of the solution $\psi$ to the NLSE (\ref{p1}); time reversibility or symmetry, time transverse or gauge invariance, dispersion relation for NLSE without external potential. The Gross-Pitaevskii equation (GPE) is NLSE
for the macroscopic wave functions, governs the properties of BECs at temperatures far below the critical condensation temperature. The GPE includes the external potential term.

There exists a huge literature for the numerical solution of the NLSE, especially for 1D problems.
Numerical methods for 2D NLSE include the alternating direction implicit (ADI) method \cite{Xu12}, implicit-explicit multistep method \cite{gao16a}, high order compact finite difference schemes \cite{Wang13}, and local discontinuous Galerkin method \cite{Xu05}, the implicit Crank-Nicolson finite difference (CNFD), semi-implicit finite difference (SIFD), time splitting spectral and pseudo spectral methods \cite{Antoinegpelab1,Antoinegpelab2}.

\section{Full order model}
\label{Sec:fom}

In this section we introduce the space-time discretization of the NLSE \eqref{p1} leading to the so-called full order model (FOM), and we discuss the preservation of invariants for FOM under symmetric interior penalty Galerkin (SIPG) space discretization together with average vector field (AVF) time integrator.

Applying Green's formula to the NLSE \eqref{p1}, the continuous weak formulation of the problem (\ref{p1}) can be written as follows: for a.e. $t\in(0,T]$, find $\psi:=\psi(t,\bm{x}) \in W$ such that for any $\phi:=\phi(\bm{x}) \in W$

\begin{subequations}\label{p2}
\begin{eqnarray}
 i \, \langle \frac{\partial \psi}{\partial t}, \phi \rangle  & = &  \langle \alpha\nabla \psi, \nabla \phi \rangle  + \langle \beta |\psi|^2 \psi + V(\bm{x}) \psi, \phi \rangle,\\
 \langle \psi,\phi \big) &=& \langle \psi_0,\phi \rangle,
\end{eqnarray}
\end{subequations}
where $\langle \cdot , \cdot\rangle:= \langle\cdot , \cdot\rangle_{L^2(\Omega)}$ denotes the $L^2$-inner product on the space $L^2(\Omega)$ of square integrable functionals over a domain $\Omega\subset\mathbb{R}^2$, and together with the corresponding norm $\|\cdot\|_{L^2(\Omega)}$:

$$
L^2(\Omega) = \{f:\Omega \mapsto \mathbb{R} \; | \; \int_{\Omega}[f(\bm{x})]^2 d\bm{x} < \infty \},
$$
$$
\langle f(\bm{x}) , g(\bm{x})\rangle_{L^2(\Omega)} = \int_{\Omega}f(\bm{x})g(\bm{x})d\bm{x} \; , \quad \|f(\bm{x})\| := \|f(\bm{x})\|_{L^2}=\sqrt{\langle f(\bm{x}) , f(\bm{x})\rangle_{L^2(\Omega)}}.
$$

The solution space $W$ is set as the space of functions in the Sobolev space $H^{1}(\Omega)$ satisfying the periodicity on the boundary $\partial \Omega$ of the domain $\Omega$:

\begin{align*}
W &= \{\phi\in H^{1}(\Omega) | \; \phi \text{ periodic on }  \partial \Omega\},\\
H^{1}(\Omega) &= \{\phi\in L^{2}(\Omega) | \; \frac{\partial\phi}{\partial x}, \; \frac{\partial\phi}{\partial y}\in L^2(\Omega)\}.
\end{align*}

We decompose the complex valued wave function $\psi(t,\bm{x})$ into its real and imaginary parts as $\psi(t,\bm{x})=r(t,\bm{x}) +i s(t,\bm{x})$,  then   (\ref{p2}) can be re-written as a pair of real-valued system:

\begin{subequations}\label{p3}
\begin{align}
 \langle\frac{\partial r}{\partial t}, \eta \rangle   & =  \langle \alpha\nabla s, \nabla \eta \rangle   +  \langle\beta(r^2 +s^2)s + V(\bm{x})s , \eta \rangle, & \forall \eta \in W
 \\
 \langle\frac{\partial s}{\partial t}, \vartheta \rangle & =  - \langle \alpha\nabla r, \nabla \vartheta \rangle   -  \langle \beta(r^2 +s^2)r + V(\bm{x})r, \vartheta \rangle, & \forall  \vartheta \in W \\
 &  \langle r,\eta \rangle = \langle r_0,\eta \rangle, \qquad \langle s,\vartheta \rangle = \langle s_0,\vartheta \rangle, & \forall \eta , \vartheta \in W
\end{align}
\end{subequations}
where $\psi_0(\bm{x}) = r_0(\bm{x}) +i s_0(\bm{x})$. In the following, we introduce the SIPG method \cite{Arnold02,riviere08dgm} for the  spatial discretization of NLSE.  We remark that among the three common interior penalty Galerkin discretizations, nonsymmetric interior penalty Galerkin (NIPG), and incomplete interior penalty Galerkin (IIPG), only SIPG leads to a Hamiltonian system of ODEs \cite{Karasozen15nls}.

\subsection{Semi-discretization in space}

Consider for $h>0$, $\{ \mathcal{T}_h\}_h$ as a family of shape-regular simplicial triangulations of $\Omega$. Each mesh $\mathcal{T}_h$ consists of closed triangles such that $\overline{\Omega} = \bigcup_{K \in \mathcal{T}_h} \overline{K}$ holds. The diameter of an (triangular) element $K$ and the length of an edge $E$ are denoted by $h_{K}$ and $h_E$, respectively. We split the set of all edges $\mathcal{E}_h$ into the set $\mathcal{E}^{0}_{h}$ of interior edges and the set $\mathcal{E}^{p}_{h}$ of periodic boundary edge-pairs. An individual element of the set $\mathcal{E}^{p}_{h}$ is of the form $\omega =\{E_l, E_m\}$ where $E_l \subset \partial K_{n_l}\cap \partial\Omega$, and $E_m \subset \partial K_{n_m}\cap \partial\Omega$ is the corresponding periodic edge-pair of $E_l$ with $l>m$. Let the edge $E$ be a common edge for two elements $K$ and $K^e$. For a piecewise continuous scalar function $w$, there are two traces of $w$ along $E$, denoted by $w|_E$ from inside $K$ and $w^e|_E$ from inside $K^e$. Then, the jump and average of $w$ across the edge $E$ are defined by:

\begin{equation*}
\jump{w}=w|_E\bm{n}_{K}+w^e|_E\bm{n}_{K^e}, \quad
\average{w}=\frac{1}{2}\big( w|_E+w^e|_E \big),
\end{equation*}
where $\bm{n}_K$ (resp. $\bm{n}_{K^e}$) denotes the unit outward normal to $\partial K$ (resp. $\partial K^e$). Similarly, for a piecewise continuous vector field $\nabla w$, the jump and average across the edge $E$ are given by

\begin{equation*}
           \jump{\nabla w}=\nabla w|_E \cdot \bm{n}_{K}+\nabla w^e|_E \cdot \bm{n}_{K^e}, \quad
          \average{\nabla w}=\frac{1}{2}\big(\nabla w|_E+\nabla w^e|_E \big).
\end{equation*}
For a boundary edge $E \in K \cap \partial\Omega$, we set $\average{\nabla w}=\nabla w$ and $\jump{w}=w\bm{n}$, where $\bm{n}$ is the unit outward normal vector to $\partial\Omega$. The treatment of the periodic boundary edges is the same as the interior edges, in other words,  appropriate jump and average operators introduced in \cite{Vemaganti07} are used. Then, for each $\omega=\{ E_l, E_m\}\in \mathcal{E}^{p}_{h}$, we define the jump and average operators as follow

\[
\jump{w}_{\omega}=w|_{E_l}\bm{n}_l - w|_{E_m}\bm{n}_l, \quad \average{w}_{\omega}=\frac{1}{2} \big( w|_{E_l} + w|_{E_m} \big),
\]
where, for $l>m$, we use the fixed unit outward normal vector $\bm{n}_l$ to the boundary edge $E_l\subset\partial\Omega$.

For continuous finite elements methods (FEMs), the idea is to approximate the solutions $r$ and $s$ of (\ref{p3}) from a conforming, finite dimensional space $W_h \subset W$. On the other hand, we point out  that in discontinuous Galerkin (dG) methods the space of solutions or test functions consist of piecewise discontinuous polynomials. That is, no continuity constraints are explicitly imposed on the state and test functions across the element interfaces. As a consequence, weak formulations must include jump terms across interfaces, and typically penalty terms are added to control the jump terms. We define the space of solution and test functions by

\begin{align}\label{dg3}
W_h &=\set{w \in L^2(\Omega)}{ w\mid_{K}\in \mathbb{P}^q(K) \quad \forall K \in \mathcal{T}_h},
\end{align}
where $\mathbb{P}^q(K)$  is the set of polynomials  of degree at most $q$ in $K$. Note that the space of discrete solutions and the space of test functions  are identical due to the weak treatment of boundary conditions in dG methods. Note also that the space $W_h$ is a non-conforming space, i.e., $W_h \not\subset W$.

Let $r_h(0,\bm{x}), s_h(0,\bm{x})\in W_h$ be the projections (orthogonal $L^2$-projection) of the initial conditions $r_0(\bm{x})$ and $s_0(\bm{x})$ onto $W_h$, i.e., $\forall \eta(\bm{x}), \vartheta(\bm{x}) \in W_h$

\begin{equation}
\langle r_h(0,\bm{x}), \eta(\bm{x})\rangle = \langle r_0(\bm{x}),\eta(\bm{x})\rangle, \quad  \langle s_h(0,\bm{x}), \vartheta(\bm{x})\rangle = \langle s_0(\bm{x}),\vartheta(\bm{x})\rangle.
\end{equation}
Then, the weak formulation of the system (\ref{p3}), discretized by the SIPG method, reads as: for a.e. $t \in (0,T]$, find $r_h:=r_h(t,\bm{x})$, $s_h:=s_h(t,\bm{x}) \in W_h$ such that

\begin{subequations}\label{dg4}
\begin{align}
\langle \frac{\partial r_h}{\partial t}, \eta\rangle  & =   a_h(s_h, \eta) + b_{h,s}(r_h,s_h;\eta),  &  \forall \eta \in W_h, \label{dg4a} \\
\langle \frac{\partial s_h}{\partial t}, \vartheta\rangle & =  - a_h(r_h, \vartheta) - b_{h,r}(r_h,s_h;\vartheta), & \forall \vartheta \in W_h, \label{dg4b}
\end{align}
\end{subequations}
where the bi-linear form $a_h(\cdot , \cdot )$ and the non-linear forms $b_{h,z}(\cdot , \cdot ; \cdot )$ for $z\in\{r,s\}$ are given for any $\phi\in W_h$ by

\begin{subequations} \label{dg5}
\begin{align*}
a_h(z_h,\phi)=& \sum \limits_{K \in \mathcal{T}_h} \int \limits_{K}  \big(  \nabla z_h \cdot  \nabla \phi )\; d\bm{x}
                       -  \sum \limits_{ E \in \mathcal{E}^0_h } \int \limits_E \Big( \average{  \nabla z_h} \cdot \jump{\phi}   +  \average{  \nabla \phi} \cdot \jump{z_h} \Big) \; d\bm{s} \nonumber \\
                       & + \sum \limits_{K \in \mathcal{T}_h} \int \limits_{K}  V(\bm{x}) \, z_h \, \phi\; d\bm{x} - \sum_{\omega \in \mathcal{E}^{p}_{h}} \int \limits_{\omega} \Big( \average{   \nabla z_h}_{\omega} \cdot \jump{\phi}_{\omega} + \average{   \nabla \phi}_{\omega} \cdot \jump{z_h}_{\omega} \Big) \; d\bm{s}
                        \\
											&  + \sum \limits_{ E \in \mathcal{E}^0_h} \frac{\kappa \alpha }{h_E} \int \limits_E \jump{z_h} \cdot \jump{\phi} \; d\bm{s} + \sum_{\omega \in \mathcal{E}^{p}_{h}} \frac{\kappa \alpha }{h_{E}} \int \limits_{\omega} \jump{z_h}_{\omega} \cdot \jump{\phi}_{\omega} \; d\bm{s},\\
b_{h,z}(r_h,s_h;\phi)=&  \sum \limits_{K \in \mathcal{T}_h} \int \limits_{K} \beta \big( r_h^2 + s_h^2  \big)z_h \, \phi \; d\bm{x},
\end{align*}
\end{subequations}
where the parameter $\kappa$ in the above formulation is called penalty parameter which should be sufficiently large to ensure the stability of the SIPG discretization with a lower bound depending only on the polynomial degree $q$, see, e.g., \cite{riviere08dgm}.

The SIPG discretized semi-discrete solutions of \eqref{dg4} are given as

\begin{align}\label{dg6}
r_h(t,\bm{x}) = \sum \limits_{i=1}^{n_K} \sum \limits_{j=1}^{n_{q}} r_{j}^{i}(t) \varphi_{j}^{i}(\bm{x}), \qquad
s_h(t,\bm{x}) = \sum \limits_{i=1}^{n_K} \sum \limits_{j=1}^{n_{q}} s_{j}^{i}(t) \varphi_{j}^{i}(\bm{x}),
\end{align}
where $r_{j}^{i}(t), s_{j}^{i}(t)$, and $\varphi_{j}^{i}(\bm{x})$ are the unknown coefficients and the basis functions for $W_h$, respectively, for $j=1,2,\cdots, n_{q}$ and $i=1,2, \cdots, n_K$. The number $n_K$ denotes the number of (triangular) elements in $\mathcal{T}_h$, and $n_{q}$ is the local dimension on each element with the identity for the 2D problems $n_{q}= (q+1)(q+2)/2$, where $q$ is the degree of the polynomial order. Note that the degrees of freedom in dG methods are given by $N:=n_K\times n_q$, and throughout this paper we denote by $N$ the dimension of the high-fidelity model, i.e., the full order model (FOM).  Inserting the expansions (\ref{dg6}) into the system (\ref{dg4}), we obtain the following semi-discrete Hamiltonian system

\begin{equation}\label{fom}
\bm{M}\bm{z}_t = \bm{J}(\bm{A}\bm{z} + \bm{b}(\bm{z})),
\end{equation}
where ${\mathbf z}:={\mathbf z}(t) := ({\mathbf r}^T,{\mathbf s}^T)^T\in\mathbb{R}^{2N}$, and ${\mathbf r}:={\mathbf r}(t)\in\mathbb{R}^N$ and ${\mathbf s}:={\mathbf s }(t)\in\mathbb{R}^N$ are the unknown coefficient vectors for the solutions $r_h$ and $s_h$ with the ordered entries

\begin{align*}
{\mathbf r} &= (r_{1}^{1}(t), \cdots, r^{1}_{n_q}(t), \cdots, r_{n_K}^{1}(t), \cdots, r^{n_K}_{n_q}(t)),\\
{\mathbf s} &= (s_{1}^{1}(t), \cdots, s^{1}_{n_q}(t), \cdots, s_{n_K}^{1}(t), \cdots, s^{n_K}_{n_q}(t)).
\end{align*}
The other settings are given by

\begin{equation}\label{identities}
\bm{M} = \left[
  \begin{array}{cc}
    M & 0 \\
    0 & M\\
  \end{array}
\right], \;
\bm{A} = \left[
  \begin{array}{cc}
    A & 0 \\
    0 & A\\
  \end{array}
\right], \;
\bm{b}(\bm{z}) = \left[
  \begin{array}{cc}
    b({\mathbf r}, {\mathbf s};r_h) \\
    b({\mathbf r}, {\mathbf s}  ;s_h)
  \end{array}
\right], \; \bm{J} = \left[
  \begin{array}{cc}
    0 & \text{Id} \\
    -\text{Id} & 0\\
  \end{array} \right ],
\end{equation}
where $M\in\mathbb{R}^{N\times N}$ is the usual mass matrix, $A\in\mathbb{R}^{N\times N}$ is the stiffness matrix corresponding to the bilinear form $a_h(\cdot,\cdot)$, and $b(\cdot , \cdot ;z)\in\mathbb{R}^N$ are the vectors corresponding to the non-linear forms (inner products) $b_{h,z}(\cdot,\cdot;\cdot)$ for $z\in\{r,s\}$. The matrix ${\bm J}$ is the skew-symmetric matrix with $\text{Id}$ is the $N$ dimensional identity matrix.

\subsection{Time discretization}

For the temporal discretization of the semi-discrete system \eqref{fom}, we use the energy preserving, implicit, second order convergent AVF method \cite{Celledoni12ped,Quispel08}. For a system of  ordinary differential equations (ODEs)

\begin{equation*}
\dot{{\bm y}} = f({\bm y}),
\end{equation*}
the  AVF method  reads as

\begin{equation*}
\frac{{\bm y}_{n+1}-{\bm y}_{n}}{\tau} =  \int _{0}^{1} f (\xi {\bm y}_{n+1}+( 1-\xi ){\bm y}_{n}) \; d \xi,
\end{equation*}
where ${\bm y}_{n+1}$ is the unknown solution at time $t_{n+1}$, ${\bm y}_{n}$ is the known solution at time $t_{n}$, and $\tau =t_{n+1}-t_{n}$ is the uniform time-step size. The AVF method also preserves the energy of  Hamiltonian systems and Poisson systems with non-constant skew-symmetric structure \cite{Hairer10}, like the KdV equation and bi-Hamiltonian systems \cite{Karasozen13}. For problems with polynomial Hamiltonians, the AVF method can be interpreted as the Runge–Kutta method \cite{Celledoni14}, where the integrals can be evaluated exactly, and the implementation is comparable to that of the implicit mid-point rule. Using the multi-symplectic formulation of Hamiltonian PDEs, the AVF method is applied as an energy-preserving wavelet collocation to the NLSE and Camassa-Holm equation \cite{Gong16}, and as local energy-preserving method  to the NLSE and KdV equation \cite{Gong14}, to the 2D NLSE \cite{Li15}.

For time discretization, we divide the time interval $[0,T]$ into $N_T$ steps: $0=t_0 < t_1 < \cdots < t_{N_T}=T
$ with the uniform time step size $\tau = t_{n}-t_{n-1}$, $n=1,2,\cdots, N_T$.  We set $\bm{z}_{n} \approx \bm{z} (t_n)$ as the approximate solution vector at the time instance $t=t_n$, $n=0,1,\ldots ,N_T$, with $\bm{z}_n=[{\bm r}_n^T, {\bm s}_n^T]^T$, and ${\bm r}_{n} \approx {\bm r} (t_n)$ and ${\bm s}_{n} \approx {\bm s} (t_n)$. For $t=0$, let $r_h(0,\bm{x}), s_h(0,\bm{x})\in W_h$ be the projections (orthogonal $L^2$-projections) of the initial conditions $r_0({\bm x}), s_0({\bm x})$ onto $W_h$, and let ${\bm r}_0$ and ${\bm s}_0$ be the corresponding initial coefficient vectors satisfying the expansions (\ref{dg6}), and set $\bm{z}_0=[{\bm r}_0^T,{\bm s}_0^T]^T$. Then, applying the AVF method to the semi-discrete system \eqref{fom}, the full discrete problem of the NLSE \eqref{p1} reads as: for $n=0,1, \ldots , N_T-1$, find $\bm{z}_{n+1}$ satisfying

\begin{equation*}
\bm{M}(\bm{z}_{n+1}-\bm{z}_{n}) \; = \; \bm{J}\left( \frac{\tau}{2}\bm{A}(\bm{z}_{n+1}+\bm{z}_{n}) + \tau \int _{0}^{1} \bm{b}(\xi \bm{z}_{n+1}+( 1-\xi )\bm{z}_{n}) \; d \xi\right).
\end{equation*}

\subsection{Preservation of the invariants in FOM}

The NLSE  is time reversible or symmetric, i.e., the solutions are unchanged under the change of time as $t \rightarrow -t$ and taken conjugate in the equation \eqref{p1}. For the full discrete scheme, the time reversibility implies that the
scheme remains unchanged under the operation: $(n, n + 1) \longleftrightarrow (n + 1, n)$ and $\psi^n  \longleftrightarrow  \psi^{n+1}$ which is also true for the midpoint rule (Crank-Nicolson finite difference scheme) \cite{Antoine13} and SIPG-AVF scheme. There are two other invariants. The first one is the time transverse or gauge invariance, i.e., the translation of the potential $V\rightarrow V + \epsilon$ changes the phase of the solution as $\psi(t,\bm{x}) \rightarrow \psi(t,\bm{x}) e^{-i\epsilon t}$. The density or modulus of the solutions, $\rho=|\psi|^2$, is unchanged. NLSE admits plane wave solutions, satisfying a dispersion relation. NLSE without external potential (when $V(\bm{x})= 0$) generates plane wave solutions $\psi(t,\bm{x}) = A e^{i(\bm(k)\bm{x} \omega t)}$, where $A$ is the amplitude, $\omega$ is the time frequency, $\bm{k}$ is the spatial wave number. Like the Crank-Nicolson method, AVF method can not preserve the time translation invariant and the dispersion relation. For an overview of the preservation of invariants of NLSE at the discrete level and computational cost we refer to \cite{Antoine13}.

In the sequel, we show the conservation of discrete mass and discrete energy.

\begin{theorem}\label{thm_mass}
The semi-discrete solution $\psi_h(t,\bm{x})$ through the FOM \eqref{fom}, discretized by the SIPG method, conserves the discrete mass

\begin{equation}\label{est1}
N_h(\psi_h(t,\bm{x})) = \|\psi_h(t,\bm{x})\|^2 = \int_{\Omega} |\psi_h(t,\bm{x})|^2 \; d \bm{x} = \int_{\Omega} (r_h^2(t,\bm{x}) + s_h^2(t,\bm{x})) \; d \bm{x}
\end{equation}
exactly for all time.
\end{theorem}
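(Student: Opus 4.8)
The plan is to differentiate the discrete mass in time and exploit the Hamiltonian structure of the semi-discrete system \eqref{fom} --- the skew-symmetry of $\bm{J}$ and the symmetry of the SIPG stiffness matrix $\bm{A}$. First I would rewrite \eqref{est1} as a quadratic form in the coefficient vector: since the same basis $\{\varphi_j^i\}$ expands both $r_h$ and $s_h$ in \eqref{dg6} and $M$ is the (symmetric, positive definite) mass matrix,
\begin{equation*}
N_h(\psi_h(t,\bm{x})) = \|r_h\|^2 + \|s_h\|^2 = {\mathbf r}^T M {\mathbf r} + {\mathbf s}^T M {\mathbf s} = {\mathbf z}^T \bm{M}{\mathbf z}.
\end{equation*}
Differentiating in $t$ and substituting the FOM \eqref{fom} gives
\begin{equation*}
\frac{d}{dt} N_h(\psi_h) = 2\,{\mathbf z}^T \bm{M}{\mathbf z}_t = 2\,{\mathbf z}^T \bm{J}\big(\bm{A}{\mathbf z} + \bm{b}({\mathbf z})\big) = 2\,{\mathbf z}^T \bm{J}\bm{A}{\mathbf z} + 2\,{\mathbf z}^T \bm{J}\bm{b}({\mathbf z}),
\end{equation*}
so it suffices to show that both terms on the right vanish identically.

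For the linear term I would use the block forms in \eqref{identities}: $\bm{J}\bm{A}$ has $A$ in its off-diagonal blocks, whence ${\mathbf z}^T \bm{J}\bm{A}{\mathbf z} = {\mathbf r}^T A{\mathbf s} - {\mathbf s}^T A{\mathbf r} = 0$ because $A$ is symmetric. The single point that genuinely requires checking here is the symmetry of the SIPG bilinear form $a_h(\cdot,\cdot)$ of \eqref{dg5} --- its consistency and penalty terms, the potential term, and in particular the periodic edge-pair contributions built from the operators of \cite{Vemaganti07} --- but this is precisely the property that distinguishes SIPG from NIPG/IIPG and that makes \eqref{fom} Hamiltonian, so I would verify it term by term. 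For the nonlinear term I would unwind \eqref{identities} and \eqref{dg5}: applying $\bm{J}$ to the stacked vector $\bm{b}({\mathbf z})$ and using the linearity of the nonlinear forms in their test argument together with \eqref{dg6},
\begin{equation*}
{\mathbf z}^T \bm{J}\bm{b}({\mathbf z}) = {\mathbf r}^T b({\mathbf r},{\mathbf s};s_h) - {\mathbf s}^T b({\mathbf r},{\mathbf s};r_h) = b_{h,s}(r_h,s_h;r_h) - b_{h,r}(r_h,s_h;s_h),
\end{equation*}
and both of these forms equal $\sum_{K\in\mathcal{T}_h}\int_K \beta\,(r_h^2+s_h^2)\,r_h\,s_h\,d\bm{x}$, so their difference is zero --- the cubic nonlinearity is unchanged under swapping the roles of $r_h$ and $s_h$ in this cross-pairing. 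Combining the two identities, $\tfrac{d}{dt}N_h(\psi_h)\equiv 0$, and integrating over $[0,t]$ yields conservation for all time.

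An equivalent and perhaps more transparent route, which I would also present, works directly in the weak SIPG formulation \eqref{dg4}: taking $\eta = r_h$ in \eqref{dg4a} and $\vartheta = s_h$ in \eqref{dg4b} and adding gives
\begin{equation*}
\frac{1}{2}\frac{d}{dt}\big(\|r_h\|^2 + \|s_h\|^2\big) = \big(a_h(s_h,r_h) - a_h(r_h,s_h)\big) + \big(b_{h,s}(r_h,s_h;r_h) - b_{h,r}(r_h,s_h;s_h)\big),
\end{equation*}
and both parentheses vanish --- the first by symmetry of $a_h$, the second by the identical cubic integrand. I do not anticipate a real obstacle; the only items needing a little care are the bookkeeping of the periodic jump/average operators when confirming the symmetry of $a_h$, and observing that, since $r_h(0,\cdot)$ and $s_h(0,\cdot)$ are the orthogonal $L^2$-projections of $r_0,s_0$, the constant value of $N_h$ equals $N_h(\psi_h(0,\bm{x}))$ --- i.e., conservation is with respect to the projected initial mass.
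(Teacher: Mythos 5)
Your second route is exactly the paper's proof: test \eqref{dg4a} with $\eta=r_h$ and \eqref{dg4b} with $\vartheta=s_h$, add, and observe that the $a_h$ terms cancel by symmetry and the cubic terms cancel because $b_{h,s}(r_h,s_h;r_h)=b_{h,r}(r_h,s_h;s_h)$; your matrix-level first route is the same computation in coefficient form, and if anything you justify the cancellations more explicitly than the paper does. Correct, same approach.
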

\begin{proof}
To prove the discrete mass conservation, we utilize the weak formulation \eqref{dg4} of FOM. We choose the test functions $\eta=r_h$ in \eqref{dg4a} and $\vartheta=s_h$ in \eqref{dg4b}. Then, summation of \eqref{dg4a} and \eqref{dg4b}, and using the linearity of bilinear form $a_h$ in both arguments, yields

\begin{eqnarray*}
\langle \frac{\partial r_h}{\partial t}, r_h \rangle + \langle \frac{\partial s_h}{\partial t}, s_h \rangle =0,
\end{eqnarray*}
by which we obtain

\begin{equation*}
\frac{1}{2} \left( \frac{d}{dt} \|r_h\|^2 + \frac{d}{dt} \|s_h\|^2 \right) = \frac{1}{2} \left( \frac{d}{dt} \|r_h + i s_h \|^2 \right) \\
= \frac{1}{2}  \frac{d}{dt} \|\psi_h \|^2 =0.
\end{equation*}
From this equation we can see that the discrete mass $N_h(\psi_h(t,\bm{x}))$ is invariant for all time. \qed
\end{proof}

For the discrete energy preservation, we first introduce the following mesh-dependent energy functional which can be regarded as SIPG counterpart of the continuous energy functional $E(\cdot)$ defined in \eqref{energy}:

\begin{equation}\label{denergy}
E_h(r_h,s_h) := \Phi_h(r_h) + \Phi_h(s_h) + \frac{\beta}{4}\|r_h^2+s_h^2\|^2_{L^2(\mathcal{T}_h)},
\end{equation}
where

\begin{align*}
\Phi_h(w) = & \; \frac{1}{2}\langle \alpha\nabla w,\nabla w\rangle_{L^2(\mathcal{T}_h)} + \frac{1}{2}\langle V(\bm{x})w,w\rangle_{L^2(\mathcal{T}_h)} + J_h(w),  \\
J_h(w) := & \;  - \sum \limits_{ E \in \mathcal{E}^0_h } \int_E \average{w}\jump{w} d \bm{s} + \sum \limits_{ E \in \mathcal{E}^0_h } \frac{\kappa\alpha}{2h_E} \int_E \jump{w}\jump{w} d \bm{s} \\
 & \;  - \sum \limits_{ \omega \in \mathcal{E}^p_h } \int_{\omega} \average{w}_{\omega}\jump{w}_{\omega} d \bm{s} + \sum \limits_{ \omega \in \mathcal{E}^p_h } \frac{\kappa\alpha}{2h_E} \int_{\omega} \jump{w}_{\omega}\jump{w}_{\omega} d \bm{s}, \\
& \;  \|w\|^2_{L^2(\mathcal{T}_h)} = \langle w,w\rangle_{L^2(\mathcal{T}_h)} = \sum \limits_{K \in \mathcal{T}_h} \int_{K}w^2 d\bm{x}.
\end{align*}
Note that for a continuous solution $\psi = r+is$, since $J_h(r)=J_h(s)=0$, we have $E_h(r,s)=E(\psi)$.

\begin{theorem}
The FOM \eqref{fom} is a Hamiltonian ODE

\begin{equation}\label{fom_ham}
\bm{M}\bm{z}_t = \bm{J}\nabla_{\bm{z}} E_h(\bm{z})
\end{equation}
arising from the discrete energy functional $E_h(\bm{z})=E_h(r_h,s_h)$ defined in \eqref{denergy}, where $\bm{J}$ is the skew-symmetric matrix  and $\bm{z}=[\bm{r}^T,\bm{s}^T]^T$ is the vector of unknown coefficients $\bm{r}$ of $r_h$ and $\bm{s}$ of $s_h$. Moreover, for the discrete energy $E_h$  in \eqref{denergy}, the FOM \eqref{fom} satisfies the invariance of the energy for all time.
\end{theorem}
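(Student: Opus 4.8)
The plan is to prove the two assertions in turn: first that the semidiscrete system \eqref{fom} is precisely the gradient (Hamiltonian) system \eqref{fom_ham} associated with $E_h$, and then, using that representation, that $E_h$ is constant along trajectories. The conservation part will mimic the proof of Theorem~\ref{thm_mass}, this time testing the equations of motion against the time derivative of the solution.

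For the first part I would rewrite the mesh-dependent energy \eqref{denergy} in terms of the coefficient vectors. Writing $\bm{w}$ for the coefficient vector of $w_h\in W_h$, the SIPG stiffness matrix $A$ satisfies $\bm{v}^T A\bm{w}=a_h(v_h,w_h)$, and a term-by-term comparison of the diffusion, potential, consistency (interior-edge and periodic-pair) and penalty contributions shows $\Phi_h(w)=\tfrac12 a_h(w,w)$, hence $\Phi_h(r_h)+\Phi_h(s_h)=\tfrac12\bm{r}^TA\bm{r}+\tfrac12\bm{s}^TA\bm{s}$. The crucial structural fact here is that SIPG makes $A$ symmetric, so that $\nabla_{\bm{r}}\bigl(\tfrac12\bm{r}^TA\bm{r}\bigr)=A\bm{r}$ and $\nabla_{\bm{s}}\bigl(\tfrac12\bm{s}^TA\bm{s}\bigr)=A\bm{s}$ with no $\tfrac12(A+A^T)$ correction term; this is exactly why NIPG and IIPG fail to yield a Hamiltonian ODE. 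For the quartic term, differentiating $\tfrac{\beta}{4}\|r_h^2+s_h^2\|_{L^2(\mathcal{T}_h)}^2=\tfrac{\beta}{4}\sum_{K}\int_K(r_h^2+s_h^2)^2\,d\bm{x}$ with respect to $r_j^i$ brings down a factor $4$ against the $\tfrac14$ and leaves $\beta\sum_{K}\int_K(r_h^2+s_h^2)\,r_h\,\varphi_j^i\,d\bm{x}$, which is exactly the corresponding entry of $b(\bm{r},\bm{s};r_h)$ (and symmetrically for $s_j^i$ and $b(\bm{r},\bm{s};s_h)$). Collecting these computations gives $\nabla_{\bm{z}}E_h(\bm{z})=\bm{A}\bm{z}+\bm{b}(\bm{z})$, so \eqref{fom} becomes $\bm{M}\bm{z}_t=\bm{J}\nabla_{\bm{z}}E_h(\bm{z})$, i.e.\ \eqref{fom_ham}.

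For the energy invariance I would differentiate $E_h(\bm{z}(t))$ in time and substitute \eqref{fom}. By the identity of the first part the $\bm{r}$- and $\bm{s}$-blocks of $\nabla_{\bm{z}}E_h$ are $A\bm{r}+b(\bm{r},\bm{s};r_h)$ and $A\bm{s}+b(\bm{r},\bm{s};s_h)$, while the block form of \eqref{fom} gives $M\bm{r}_t=A\bm{s}+b(\bm{r},\bm{s};s_h)=\nabla_{\bm{s}}E_h$ and $M\bm{s}_t=-\bigl(A\bm{r}+b(\bm{r},\bm{s};r_h)\bigr)=-\nabla_{\bm{r}}E_h$. Hence $\tfrac{d}{dt}E_h=(\nabla_{\bm{r}}E_h)^T\bm{r}_t+(\nabla_{\bm{s}}E_h)^T\bm{s}_t=-\bm{s}_t^TM\bm{r}_t+\bm{r}_t^TM\bm{s}_t=0$, the last equality using the symmetry of $M$. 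Equivalently, since $\bm{M}=\mathrm{diag}(M,M)$ commutes with $\bm{J}$, the matrix $\bm{M}^{-1}\bm{J}$ is skew-symmetric, so $\tfrac{d}{dt}E_h=(\nabla_{\bm{z}}E_h)^T\bm{M}^{-1}\bm{J}\,\nabla_{\bm{z}}E_h=0$; integrating gives $E_h(\bm{z}(t))\equiv E_h(\bm{z}_0)$ for all $t$.

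I expect the only genuine work to be in the first part, namely the careful verification of $\nabla_{\bm{z}}E_h(\bm{z})=\bm{A}\bm{z}+\bm{b}(\bm{z})$: one must check that $\Phi_h(w)=\tfrac12 a_h(w,w)$ accounting for every interior-edge, periodic-pair and penalty term together with the potential term, that the SIPG stiffness matrix really is symmetric, and that the constant multiplying the cubic nonlinearity emerges exactly as $\beta$. Once that identity is in hand, the Hamiltonian form \eqref{fom_ham} is immediate and the conservation statement reduces to the one-line skew-symmetry argument above, entirely parallel to the mass-conservation proof.
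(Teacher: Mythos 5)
Your proposal is correct and follows essentially the same route as the paper: both establish the identity $\nabla_{\bm{z}}E_h(\bm{z})=\bm{A}\bm{z}+\bm{b}(\bm{z})$ from the representation $E_h=\tfrac12 a_h(r_h,r_h)+\tfrac12 a_h(s_h,s_h)+\tfrac{\beta}{4}\|r_h^2+s_h^2\|^2$ and then conclude $\tfrac{d}{dt}E_h=(\nabla_{\bm{z}}E_h)^T\bm{M}^{-1}\bm{J}\nabla_{\bm{z}}E_h=0$ by skew-symmetry. Your version is in fact slightly more careful on two points the paper glosses over: it makes explicit that the symmetry of the SIPG form is what yields $\nabla(\tfrac12\bm{r}^TA\bm{r})=A\bm{r}$ (equivalently, that the Fr\'echet derivative of $\tfrac12 a_h(r_h,r_h)$ is $a_h(r_h,\cdot)$), and that $\bm{M}^{-1}\bm{J}$ is skew-symmetric because $\bm{M}=\mathrm{diag}(M,M)$ commutes with $\bm{J}$, not merely because $\bm{M}$ is symmetric.
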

\begin{proof}
Firstly, we claim that $\nabla_{\bm{z}} E_h(\bm{z}) = \bm{A}\bm{z} + \bm{b}(\bm{z})$, for which then \eqref{fom_ham} is the FOM \eqref{fom}. For this, by the definition of the bilinear form $a_h$ in \eqref{dg5}, note that the  discrete energy functional $E_h(r_h,s_h)$ in \eqref{denergy} can be written as

\begin{equation}\label{bi_energy}
E_h(r_h,s_h) = \frac{1}{2}a_h(r_h,r_h) + \frac{1}{2}a_h(s_h,s_h) + \frac{\beta}{4}\|r_h^2+s_h^2\|^2_{L^2(\mathcal{T}_h)}.
\end{equation}
Then, Fr\'{e}chet derivatives of $E_h(r_h,s_h)$ with respect to the components $r_h$ and $s_h$ in any directions $\eta , \vartheta\in W_h$ are:

\begin{align*}
\frac{\delta E_h}{\delta r_h} \eta &= a_h(r_h,\eta) + \langle \beta (r_h^2+s_h^2)r_h , \eta \rangle_{L^2(\mathcal{T}_h)}, & \forall \eta\in W_h,\\
\frac{\delta E_h}{\delta s_h} \vartheta &= a_h(s_h,\vartheta) + \langle \beta (r_h^2+s_h^2)s_h , \vartheta \rangle_{L^2(\mathcal{T}_h)}, & \forall \vartheta\in W_h,
\end{align*}
or in matrix-vector form

\begin{align*}
\frac{\delta E_h}{\delta r_h} \eta &= A{\bm r} + b({\bm r},{\bm s};r_h),\\
\frac{\delta E_h}{\delta s_h} \vartheta &= A{\bm s} + b({\bm r},{\bm s}   ;s_h),
\end{align*}
Hence, with the definition of the  matrices and vectors in \eqref{identities}, we obtain that

\begin{equation}\label{eh1}
\nabla_{\bm{z}} E_h(\bm{z}) = \left[
\begin{array}{ll}
A{\bm r} + b( {\bm r},{\bm s}  ;r_h)\\
A{\bm s} + b({\bm r},{\bm s};s_h)
\end{array} \right] = \bm{A}\bm{z} + \bm{b}(\bm{z}).
\end{equation}
Further, using the identity \eqref{eh1} and the FOM \eqref{fom}, the invariance of the energy $E_h(\bm{z})$ for all time follows as:

\begin{equation*}
\frac{d}{dt}E_h(\bm{z}) = [ \nabla_{\bm{z}} E_h(\bm{z}) ]^T \bm{z}_t = [ \bm{A}\bm{z} + \bm{b}(\bm{z}) ]^T \bm{M}^{-1}\bm{J} [\bm{A}\bm{z} + \bm{b}(\bm{z})] = 0,
\end{equation*}
since the matrix $\tilde{\bm{J}}:=\bm{M}^{-1}\bm{J}$ is again a skew-symmetric matrix due to the fact that $\bm{J}$ is a skew-symmetric matrix and $\bm{M}$ is symmetric.\qed
\end{proof}

We also note that since the system of ODEs \eqref{fom} is Hamiltonian, the semi-discrete energy is the preserved by the AVF method \cite{Celledoni12ped}.

\section{Reduced order model}
\label{Sec:rom}

Because the computation of the FOM \eqref{fom} is time consuming,  in this section, we construct a small dimensional reduced order model (ROM) by utilizing the proper orthogonal decomposition (POD) method \cite{Kunisch01}. In addition to POD, the nonlinear vectors in the ROM are computed efficiently by discrete empirical interpolation method (DEIM) and dynamic mode decomposition (DMD). The low-rank approximation is computed in three steps: computation of the numerical solutions of the original high-dimensional system; dimensionality-reduction of the snapshot matrices by singular value decomposition (SVD);   Galerkin projection of the dynamics on the low-rank subspace. The first two steps are known as the offline stage, and the last one is the online stage. Offline stage is usually expensive and online step should be fast to run in real time.

\subsection{POD Galerkin projection}

For the $2N$-dimensional FOM \eqref{fom}, the ROM of lower dimension $k\ll 2N$  is formed by the Galerkin projection of the system onto a $k$-dimensional reduced space

$$
W_h^r=\text{span} \{ u_{1}, \ldots, u_{k} \}\subset [W_h]^2,
$$
resulting in the lower dimensional reduced solution $\psi_r(t,\bm{x})$ as:

\begin{equation}\label{rom_exp}
\psi_h(t,\bm{x})\approx \psi_h^r(t,\bm{x}) = \sum_{i=1}^{k} z_i^r(t) u_{i}(\bm{x}),
\end{equation}
where, without lost of generality, we have assumed that the complex solution $\psi_h=r_h+is_h$ is the column vector of the real and imaginary solutions $r_h$ and $s_h$, respectively, i.e., $\psi_h(t,\bm{x}):=[r_h(t,\bm{x}),s_h(t,\bm{x})]^T$, by which the solution $\bm{z}(t)$ of the FOM \eqref{fom} becomes the dG coefficient vector of $\psi_h(t,\bm{x})$ at time $t$. From the coefficients $z_i^r(t)$ in \eqref{rom_exp}, we set the solution of the reduced system as $\bm{z}^r(t):= \left( z_1^r(t),\ldots, z_k^r(t) \right)^T$. The functions $\{u_{i}(\bm{x}) \}_{i=1}^{k}$ in \eqref{rom_exp} are the orthogonal (in $L^2$-sense) reduced basis functions spanning the reduced space $W_h^r$. Belonging to the space $W_h^r\subset [W_h]^2$, the reduced  basis functions are linear combination of the dG  basis functions $\{ \varphi_j \}$, given by

\begin{equation*}
u_{i}(\bm{x}) = \sum_{j=1}^{2N} U_{j,i} \varphi_j(\bm{x}), \qquad i=1,\ldots ,k.
\end{equation*}
Then, using the column vectors $U_{i} = \left( U_{1,i}, \ldots ,U_{2N,i} \right)^T$, which are the coefficients of the $i$-th reduced basis function, we construct the following matrix of POD modes

\begin{equation}
\bm{U} := [ U_{1}, \ldots, U_{k}]\in \mathbb{R}^{2N\times k}.
\end{equation}

To obtain the reduced basis functions $\{u_{i}(\bm{x}) \}_{i=1}^{k}$, we need to solve the minimization problem \cite{Kunisch01}

\begin{align*}
\min_{u_{1},\ldots ,u_{k}} \frac{1}{N_T}\sum_{j=1}^{N_T} \left\| \psi_h(t_j,\bm{x}) - \sum_{i=1}^k (\psi_h(t_j,\bm{x}),u_{i})_{[L^2(\Omega)]^2}u_{i}\right\|_{[L^2(\Omega)]^2}^2 \\
\text{subject to } (u_{i},u_{j})_{[L^2(\Omega)]^2} = U_{i}^T\bm{M}U_{j}=\delta_{ij} \; , \; 1\leq i,j\leq k,
\end{align*}
where $\delta_{ij}$ is the Kronecker delta. The above minimization problem is equivalent to the eigenvalue problem \cite{Kunisch01}

\begin{equation}\label{eg1}
\mathcal{Z}\mathcal{Z}^T\bm{M}U_{i}=\sigma_{i}^2U_{i} \; , \quad i=1,2,\ldots ,k,
\end{equation}
where $\mathcal{Z}:= [\bm{z}_1 , \ldots, \bm{z}_{N_T}]\in\mathbb{R}^{2N\times N_T}$ is the snapshot matrix whose $n$-th column vector $\bm{z}_n$ is the solution vector of the FOM at time $t_n$. Then, the matrix $\bm{U}$ of POD modes can be computed through the singular value decomposition (SVD) of the snapshot matrix $\mathcal{Z}$. In addition, between the solution vector $\bm{z}$ of FOM and the reduced solution vector $\bm{z}^r$ of ROM, we have the relation

\begin{align*}
\bm{z} \approx \bm{U} \bm{z}^r, \quad \bm{z}^r \approx \bm{U}^T{\bm M} \bm{z},
\end{align*}
from where we can find the initial reduced vector $\bm{z}^r(0)$.  For a more detail description, we refer to \cite{Karasozen15}. We finally obtain the following reduced system:

\begin{equation}\label{rom}
  \frac{d}{dt} \bm{z}^r  =  \bm{A}^{\bm{r}} \bm{z}^r +  \bm{b}^{\bm{r}}( \bm{z}^r)
\end{equation}
with the reduced stiffness matrix and the reduced nonlinear vector

\begin{equation*}
\bm{A}^{\bm{r}} := \bm{U}^T\bm{J} \bm{A}, \quad \bm{b}^{\bm{r}}( \bm{z}^r) := \bm{U}^T\bm{J}\bm{b}( \bm{U}\bm{z}^r).
\end{equation*}
Like the FOM \eqref{fom}, the reduced system  (\ref{rom}) is   solved in time by the AVF method.

\subsection{Randomized singular value decomposition}

The SVD is known to be computationally demanding for large snapshot matrices resulting from the space-time discretized high dimensional model of the PDEs. Recently randomized algorithms are used in reduced order modelling to accelerate the offline computations in DMD \cite{Alla16a,Bistrian17,Erichson16}. Randomized methods for matrix computations provide an efficient computation of low-rank structures in data matrices, which are robust, reliable and computationally efficient and can be used to construct a smaller (compressed) matrix, which accurately approximates a high-dimensional data matrix.  The randomized singular value decomposition (rSVD) is robust, reliable and computationally efficient, which approximates the high dimensional snapshot matrices by constructing a smaller (compressed) matrix \cite{Halko11a,Mahoney11,Martinsson16}.

Given a snapshot matrix $Y\in {\mathbb R}^{m \times n}$ and the target rank $k\ll \min\{m,n\}$, a low rank approximation is constructed by creating a random matrix $\Omega
\in  {\mathbb R}^{n \times k}$, then the sampled matrix  $X= Y\Omega \in  {\mathbb R}^{m \times k}$. The entries of the random sampling matrix $\Omega$ are independent and identically distributed Gaussian random variables of zero mean and unit variance \cite{Alla16a}, and they are created by the MatLab routine \emph{randn}.  Afterward the $QR$ decomposition of $X$ is computed to obtain the orthonormal matrix $Q \in  {\mathbb R}^{m \times k}$, $X= QR$, so that

$$
Y \approx QQ^TY
$$
is satisfied. In the last step, the SVD of the small (compressed) matrix $B=Q^TY$ is computed. The approximation error of the rSVD can be decreased by introducing a small oversampling parameter $p$ (e.g. $p =2,5,10$). Instead of $k$  random vectors, $k+p$ are generated.

 \begin{algorithm}[htb!]
 \caption{Randomized SVD (rSVD)\label{rsvd} \cite{Alla16a} }
 \begin{algorithmic}
 \STATE Given the $m \times n$ matrix $Y$ and the target rank $k$
 \STATE Draw  $n \times k$ Gaussian random matrix  $\Omega$
 \STATE Compute the random matrix $ X = Y\Omega$
 \STATE Compute the QR decomposition $X = QR$
 \STATE Projection $B=Q^TY$
 \STATE Compute the deterministic SVD $B=\tilde{U}\Sigma V^T$
 \STATE Recover the right singular vectors $U=Q\tilde{U}$ of $Y$
 \end{algorithmic}
 \end{algorithm}

We use the rSVD for the SVD computations required in the POD reduced basis calculations, and the DEIM and DMD reduced basis calculations of the nonlinear terms.

\subsection{Preservation of the invariants in ROM}

In this section we show the preservation of discrete mass and discrete energy for the ROM \eqref{rom}. While the former is not difficult to show, we should state additional modifications to show the discrete energy preservation in ROM.

\begin{theorem}
The solution to the ROM \eqref{rom} conserves the discrete mass

\begin{equation*}
N_h(\psi_h^r(t,\bm{x})) = \|\psi_h^r(t,\bm{x})\|^2 = \int_{\Omega} |\psi_h^r(t,\bm{x})|^2 \; d \bm{x}
\end{equation*}
exactly for all time.
\end{theorem}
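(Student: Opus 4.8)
The plan is to reproduce, in the reduced coordinates, the argument already used for the FOM in Theorem~\ref{thm_mass}. First I would express the reduced discrete mass through the reduced coefficient vector. The dG coefficient vector of $\psi_h^r(t,\bm{x})$ is $\bm{U}\bm{z}^r(t)$, and the POD modes are $\bm{M}$-orthonormal, $\bm{U}^T\bm{M}\bm{U}=\mathrm{Id}_k$; hence
$$
N_h(\psi_h^r(t,\bm{x})) = \|\psi_h^r(t,\bm{x})\|^2 = (\bm{U}\bm{z}^r)^T\bm{M}(\bm{U}\bm{z}^r) = (\bm{z}^r)^T\bm{z}^r .
$$
Differentiating in time and substituting the ROM \eqref{rom}, with $\bm{v}:=\bm{U}\bm{z}^r$ and the definitions of $\bm{A}^{\bm{r}}$ and $\bm{b}^{\bm{r}}$, one gets
$$
\frac{d}{dt}N_h(\psi_h^r) = 2(\bm{z}^r)^T\!\left(\bm{A}^{\bm{r}}\bm{z}^r+\bm{b}^{\bm{r}}(\bm{z}^r)\right) = 2\,\bm{v}^T\bm{J}\big(\bm{A}\bm{v}+\bm{b}(\bm{v})\big),
$$
so the whole statement reduces to the algebraic identity $\bm{v}^T\bm{J}\big(\bm{A}\bm{v}+\bm{b}(\bm{v})\big)=0$ for every $\bm{v}\in\mathbb{R}^{2N}$.

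The key step is then precisely the cancellation already exploited in the proof of Theorem~\ref{thm_mass}. Writing $\bm{v}=(\bm{v}_1^T,\bm{v}_2^T)^T$ with $\bm{v}_1,\bm{v}_2\in\mathbb{R}^N$ and letting $r_h^v,s_h^v\in W_h$ be the dG functions with coefficient vectors $\bm{v}_1,\bm{v}_2$, the block structure \eqref{identities} of $\bm{J}$, $\bm{A}$, $\bm{b}$ yields
$$
\bm{v}^T\bm{J}\big(\bm{A}\bm{v}+\bm{b}(\bm{v})\big) = \big(\bm{v}_1^T A\bm{v}_2 - \bm{v}_2^T A\bm{v}_1\big) + \big(b_{h,s}(r_h^v,s_h^v;r_h^v) - b_{h,r}(r_h^v,s_h^v;s_h^v)\big).
$$
The first bracket vanishes because the SIPG stiffness matrix $A$ is symmetric (this is exactly why SIPG, and not NIPG or IIPG, is chosen), and the second bracket vanishes because both terms equal $\sum_{K\in\mathcal{T}_h}\int_K \beta\,(r_h^{v\,2}+s_h^{v\,2})\,r_h^v\,s_h^v\,d\bm{x}$. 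Hence $\frac{d}{dt}N_h(\psi_h^r)\equiv 0$ and the reduced discrete mass equals its value at $t=0$ for all time.

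Equivalently, and perhaps more transparently, since the ROM is the Galerkin projection of the FOM weak form \eqref{dg4} onto $W_h^r$ and the reduced solution $\psi_h^r=[r_h^r,s_h^r]^T$ itself lies in $W_h^r$, one may simply take $\psi_h^r$ as the test function in the reduced weak form (that is, $\eta=r_h^r$ in the reduced analogue of \eqref{dg4a} and $\vartheta=s_h^r$ in that of \eqref{dg4b}), and then repeat the FOM computation verbatim: symmetry of $a_h$ kills the bilinear terms, symmetry of $b_{h,z}$ under $r_h\leftrightarrow s_h$ kills the nonlinear terms, and one is left with $\tfrac12\frac{d}{dt}\big(\|r_h^r\|^2+\|s_h^r\|^2\big)=\tfrac12\frac{d}{dt}\|\psi_h^r\|^2=0$.

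I do not expect a genuine obstacle here: the proof is essentially a transcription of the FOM argument. The only points that require a little care are that the $\bm{M}$-orthonormality of $\bm{U}$ collapses the reduced mass into the plain Euclidean norm $\|\bm{z}^r\|_2^2$, and that the two cancellations above are structural identities valid for arbitrary vectors $\bm{v}=\bm{U}\bm{z}^r$ rather than only along FOM trajectories, which is exactly why the Galerkin projection inherits mass conservation automatically --- in contrast to the energy, whose preservation in the ROM needs the additional modification announced by the authors.
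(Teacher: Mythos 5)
Your proposal is correct and follows essentially the same route as the paper: the authors' proof simply observes that the FOM argument of Theorem~\ref{thm_mass} carries over verbatim once the weak formulation is posed on the reduced space $W_h^r\subset [W_h]^2$ and tested with $\psi_h^r$ itself, which is exactly your second, ``more transparent'' version. Your additional matrix--vector computation showing $\bm{v}^T\bm{J}\bigl(\bm{A}\bm{v}+\bm{b}(\bm{v})\bigr)=0$ for arbitrary $\bm{v}=\bm{U}\bm{z}^r$ is an equivalent, more explicit rendering of the same cancellation and is also correct.
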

\begin{proof}
The proof of the mass conservation in ROM follows very similar to the proof for the FOM in Theorem~\ref{thm_mass}. But now, we consider the weak formulation not on the space $W_h$ of dimension $2N$ but on the reduced space $W_{h}^r\subset W_h$. \qed
\end{proof}

\begin{theorem}
The ROM \eqref{rom} with the discrete energy $E_h$ defined in \eqref{denergy}, does not necessarily satisfy the invariance of the discrete energy for all time. But the updated ROM
\begin{equation}\label{rom_pr}
\frac{d}{dt}\bm{z}^r = \bm{A}^r \bm{z}^r + \bm{b}^r( \bm{z}^r)
\end{equation}
with the updated reduced matrix $\bm{A}^r=\bm{J}_r\bm{U}^T\bm{A}$ and the updated reduced vector $\bm{b}^r=\bm{J}_r\bm{U}^T\bm{b}( \bm{U}\bm{z}^r)$, where the skew-symmetric matrix $\bm{J}_r$ is such that $\bm{U}^T\bm{J}= \bm{J}_r\bm{U}^T$, preserves the energy for all time.

\end{theorem}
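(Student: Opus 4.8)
The plan is to reproduce, at the reduced level, the Hamiltonian argument used for the FOM. First I would introduce the \emph{reduced energy} $E_h^r(\bm{z}^r) := E_h(\bm{U}\bm{z}^r)$, i.e. the discrete energy functional \eqref{denergy} evaluated on the reconstructed state $\bm{U}\bm{z}^r$, and differentiate it by the chain rule. Using the FOM identity $\nabla_{\bm{z}}E_h(\bm{z}) = \bm{A}\bm{z}+\bm{b}(\bm{z})$ from \eqref{eh1}, this gives
\[
\nabla_{\bm{z}^r}E_h^r(\bm{z}^r) = \bm{U}^T\nabla_{\bm{z}}E_h(\bm{U}\bm{z}^r) = \bm{U}^T\bigl(\bm{A}\bm{U}\bm{z}^r + \bm{b}(\bm{U}\bm{z}^r)\bigr).
\]
Writing $\bm{g} := \bm{A}\bm{U}\bm{z}^r + \bm{b}(\bm{U}\bm{z}^r)$ and recalling (using $\bm{U}^T\bm{M}\bm{U}=I$) that the ROM \eqref{rom} is $\tfrac{d}{dt}\bm{z}^r = \bm{U}^T\bm{J}\bm{g}$, I would compute
\[
\tfrac{d}{dt}E_h^r(\bm{z}^r) = \bigl[\nabla_{\bm{z}^r}E_h^r\bigr]^T\tfrac{d}{dt}\bm{z}^r = (\bm{U}^T\bm{g})^T\bm{U}^T\bm{J}\bm{g} = \bm{g}^T(\bm{U}\bm{U}^T\bm{J})\bm{g}.
\]
Since $(\bm{U}\bm{U}^T\bm{J})^T = -\bm{J}\bm{U}\bm{U}^T$, the matrix $\bm{U}\bm{U}^T\bm{J}$ is skew-symmetric only when $\bm{J}$ commutes with the projector $\bm{U}\bm{U}^T$, which a plain POD basis does not guarantee; hence this quadratic form need not vanish and \eqref{rom} need not conserve $E_h$ in general — this settles the first (negative) assertion.

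For the positive part I would rewrite the updated ROM \eqref{rom_pr}. With $\bm{A}^r=\bm{J}_r\bm{U}^T\bm{A}$, $\bm{b}^r=\bm{J}_r\bm{U}^T\bm{b}(\bm{U}\bm{z}^r)$ and the intertwining identity $\bm{U}^T\bm{J}=\bm{J}_r\bm{U}^T$, the reduced dynamics becomes
\[
\tfrac{d}{dt}\bm{z}^r = \bm{J}_r\bm{U}^T\bigl(\bm{A}\bm{U}\bm{z}^r+\bm{b}(\bm{U}\bm{z}^r)\bigr) = \bm{J}_r\,\nabla_{\bm{z}^r}E_h^r(\bm{z}^r),
\]
that is, a Hamiltonian ODE with the \emph{constant} skew-symmetric structure matrix $\bm{J}_r$ and Hamiltonian $E_h^r$. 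Then, exactly as in the FOM theorem, $\tfrac{d}{dt}E_h^r = [\nabla_{\bm{z}^r}E_h^r]^T\bm{J}_r\nabla_{\bm{z}^r}E_h^r = 0$ because $\bm{J}_r$ is skew-symmetric, so $E_h$ is conserved along \eqref{rom_pr} for all time; and since \eqref{rom_pr} is itself Hamiltonian with constant Poisson structure, the AVF integrator used to advance it preserves $E_h^r$ at the fully discrete level as well \cite{Celledoni12ped}.

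The hard part — and the reason an ``updated'' ROM is needed at all — is justifying the existence of a skew-symmetric $\bm{J}_r$ with $\bm{U}^T\bm{J}=\bm{J}_r\bm{U}^T$. I would show this is equivalent to requiring the reduced space $W_h^r=\mathrm{range}(\bm{U})$ to be invariant under $\bm{J}$ (equivalently, closed under the canonical exchange of the $r_h$- and $s_h$-blocks), a property the snapshot-based POD basis built from $\bm{z}_n=[\bm{r}_n^T,\bm{s}_n^T]^T$ does not enjoy in general. The fix is the ``additional modification'' to the basis construction: assemble $\bm{U}$ so that it carries the block structure of $\bm{J}$ — for instance compute one POD basis $\{v_i\}$ from the pooled real and imaginary snapshots and take the lifted modes $[v_i^T,0]^T$ and $[0,v_i^T]^T$ as columns of $\bm{U}$ (a cotangent-lift-type construction). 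For such $\bm{U}$ one may set $\bm{J}_r:=\bm{U}^T\bm{J}\bm{U}$, which is automatically skew-symmetric ($(\bm{U}^T\bm{J}\bm{U})^T=\bm{U}^T\bm{J}^T\bm{U}=-\bm{J}_r$) and, by the $\bm{J}$-invariance of $\mathrm{range}(\bm{U})$, satisfies $\bm{J}_r\bm{U}^T=\bm{U}^T\bm{J}$; one then checks that the reduced mass and energy functionals are consistent with this choice. Verifying the $\bm{J}$-invariance for the chosen basis is the only genuinely new step; everything else is a line-by-line transcription of the FOM energy computation.
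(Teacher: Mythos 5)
Your computation for both halves of the theorem is essentially the paper's own: you differentiate $E_h(\bm{U}\bm{z}^r)$ along \eqref{rom}, observe that the resulting quadratic form involves $\bm{U}\bm{U}^T\bm{J}$, which need not be skew-symmetric, and then note that once a skew-symmetric $\bm{J}_r$ with $\bm{U}^T\bm{J}=\bm{J}_r\bm{U}^T$ is available, \eqref{rom_pr} is Hamiltonian with constant structure matrix $\bm{J}_r$ and the standard cancellation $[\nabla_{\bm{z}^r}E_h]^T\bm{J}_r[\nabla_{\bm{z}^r}E_h]=0$ gives conservation. Where you genuinely depart from the paper is on the existence of $\bm{J}_r$. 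The paper closes its proof by simply writing $\bm{J}_r=\bm{U}^T\bm{J}(\bm{U}^T)^{-1}=\bm{U}^T\bm{J}\bm{M}\bm{U}$, using $\bm{M}\bm{U}$ as a right inverse of $\bm{U}^T$ from the $\bm{M}$-orthonormality $\bm{U}^T\bm{M}\bm{U}=\bm{I}$. That matrix is indeed skew-symmetric (because $\bm{M}$ and $\bm{J}$ commute for the block structure in \eqref{identities}), but the required intertwining relation $\bm{J}_r\bm{U}^T=\bm{U}^T\bm{J}\bm{M}\bm{U}\bm{U}^T=\bm{U}^T\bm{J}$ holds only if $\bm{U}^T\bm{J}(\bm{I}-\bm{M}\bm{U}\bm{U}^T)=0$, i.e.\ only if the reduced space is invariant under $\bm{J}$ — exactly the condition you isolate, and which a plain snapshot-POD basis does not satisfy in general. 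Your cotangent-lift-type construction (lifting a single POD basis into both blocks so that $\mathrm{range}(\bm{U})$ is $\bm{J}$-invariant) supplies the missing justification; the paper leaves this implicit. So your proposal is correct and, on this one point, more complete than the published argument; the only caveat is that the block-structured basis is a modification of the POD basis actually used in the paper's numerics, so if one insists on the unstructured basis the first sentence of the theorem is all one can assert, and the formula $\bm{J}_r=\bm{U}^T\bm{J}\bm{M}\bm{U}$ must be read as holding only approximately.
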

\begin{proof}
Using the identity \eqref{eh1} for $\nabla_{\bm{z}} E_h(\bm{z})$ and the FOM \eqref{fom}, the reduced system \eqref{rom} can be written as

\begin{equation}\label{rom2}
\frac{d}{dt}\bm{z}^r = \bm{U}^T\bm{J}\nabla_{\bm{z}} E_h(\bm{U}\bm{z}^r).
\end{equation}
Then, for the discrete energy functional $E_h(\bm{U}\bm{z}^r)$ of the reduced system, we have that

\begin{align*}
\frac{d}{dt}E_h(\bm{U}\bm{z}^r) =& \; [ \nabla_{\bm{z}^r} E_h(\bm{U}\bm{z}^r) ]^T \frac{d}{dt}\bm{z}^r \\
=& \;  [ \bm{U}^T\nabla_{\bm{z}} E_h(\bm{U}\bm{z}^r) ]^T \bm{U}^T\bm{J} [\nabla_{\bm{z}} E_h(\bm{U}\bm{z}^r)]\\
=& \; [ \nabla_{\bm{z}} E_h(\bm{U}\bm{z}^r) ]^T \bm{U} \bm{U}^T\bm{J} [\nabla_{\bm{z}} E_h(\bm{U}\bm{z}^r)],
\end{align*}
which may be non-zero since the matrix $\bm{U} \bm{U}^T\bm{J}$ is not necessarily  a skew-symmetric matrix. But, if we can find a skew-symmetric matrix $\bm{J}_r$ so that $\bm{U}^T\bm{J}= \bm{J}_r\bm{U}^T$, then we can obtain an equivalent equation of the updated ROM \eqref{rom_pr} as

\begin{equation}\label{rom_pr2}
\frac{d}{dt}\bm{z}^r = \bm{J}_r[ \underbrace{\bm{U}^T\nabla_{\bm{z}} E_h(\bm{U}\bm{z}^r)}_{\nabla_{\bm{z}^r} E_h(\bm{U}\bm{z}^r)}].
\end{equation}
The equation \eqref{rom_pr2}, as a result the updated ROM \eqref{rom_pr}, is now Hamiltonian since $\bm{J}_r$ is skew-symmetric. Then, we also get for the discrete energy functional $E_h$ that

\begin{align*}
\frac{d}{dt}E_h(\bm{U}\bm{z}^r) =& \; [ \nabla_{\bm{z}^r} E_h(\bm{U}\bm{z}^r) ]^T \frac{d}{dt}\bm{z}^r \\
=& \;  [ \bm{U}^T\nabla_{\bm{z}} E_h(\bm{U}\bm{z}^r) ]^T \bm{J}_r[ \bm{U}^T \nabla_{\bm{z}} E_h(\bm{U}\bm{z}^r)]=0.
\end{align*}
Hence the ROM \eqref{rom_pr} satisfies the invariance of the discrete energy for all time. Indeed, the skew-symmetric matrix $\bm{J}_r$ can be easily computed as

$$
\bm{J}_r = \bm{U}^T\bm{J} (\bm{U}^T)^{-1} = \bm{U}^T\bm{J}\bm{M} \bm{U},
$$
where we used the $\bm{M}$-orthogonality of the POD modes $\bm{U}$, i.e., $\bm{U}^T\bm{M}\bm{U}=\bm{I}$.
\qed
\end{proof}

\begin{remark}
{\em The preservation of the reduced energy is valid for any reduced Hamiltonian system of ODEs  \eqref{rom_pr}. \/}

\end{remark}

\subsection{Approximation of the nonlinearities}

Although the reduced model \eqref{rom_pr} is of small dimension, the computation of the nonlinear vector $\bm{b}^r( \bm{z}^r)$ still depends on the dimension $2N$ of the FOM. In this section, we give two different approaches to reduce the computational complexity due to the nonlinear vector in the ROM \eqref{rom_pr}: discrete empirical interpolation method (DEIM) \cite{chaturantabut10nmr} and dynamic mode decomposition (DMD) \cite{Alla16a,Alla16}.

\subsubsection{Discrete empirical interpolation method (DEIM)}

The DEIM aims to find an approximation $\hat{\bm{b}}( \bm{U}\bm{z}^r)$ to the nonlinear vector $\bm{b}( \bm{U}\bm{z}^r)$, full dimensional nonlinear part of the reduced nonlinear vector $\bm{b}^r(\bm{z}^r)=\bm{J}_r\bm{U}^T\bm{b}( \bm{U}\bm{z}^r)$, by projecting it onto a subspace of the space generated by the non-linear vectors and spanned by a basis $\{ \bm{Q}_i\}_{i=1}^m$ of dimension $m\ll 2N$:

\begin{equation}\label{podG}
 \bm{b}( \bm{U}\bm{z}^r) \approx \hat{\bm{b}}( \bm{U}\bm{z}^r):=\bm{Q}a(t)
\end{equation}
where $\bm{Q}:= [\bm{Q}_1\; \ldots \; \bm{Q}_m]\in\mathbb{R}^{2N\times m}$ is the DEIM basis matrix, and $a(t)$ is the corresponding coefficient vector. Since the system \eqref{podG} is overdetermined, a projection matrix $\bm{P}=[e_{\mathfrak{p}_1},\ldots , e_{\mathfrak{p}_m}]\in\mathbb{R}^{2N\times m}$ with $e_{\mathfrak{p}_i}$ is the $i$-th column of the identity matrix $\bm{I}\in\mathbb{R}^{2N\times 2N}$ is computed. Then, the reduced model \eqref{rom_pr} can be rewritten as:

\begin{equation}\label{deim}
\frac{d}{dt} \bm{z}^r  =  \bm{A}^r \bm{z}^r +  \bm{J}_r\bm{B}\bm{b}^r_{\text{deim}}( \bm{z}^r)
\end{equation}
where the matrix $\bm{B}:= \bm{U}^T \bm{Q}(\bm{P}^T\bm{Q})^{-1}$ is computed once in the off-line stage (finite element discretization), and the reduced nonlinear vector $\bm{b}^r_{\text{deim}}( \bm{z}^r):= \bm{P}^T \bm{b}(\bm{U} \bm{z}^r)$ requires only $m\ll 2N$ integral evaluations. Using the definition of reduced matrix $\bm{A}^r=\bm{J}_r\bm{U}^T\bm{A}$, we also have an equivalent formulation of \eqref{deim} that

\begin{equation}\label{deim2}
\frac{d}{dt} \bm{z}^r  =  \bm{J}_r [ \bm{U}^T \bm{A}\bm{z}^r +  \bm{U}^T\hat{\bm{b}}( \bm{U}\bm{z}^r)],
\end{equation}
where $\hat{\bm{b}}( \bm{U}\bm{z}^r)= \bm{Q}(\bm{P}^T\bm{Q})^{-1}\bm{P}^T\bm{b}( \bm{U}\bm{z}^r)$ is the DEIM approximation of the nonlinear part $\bm{b}( \bm{U}\bm{z}^r)$. When DEIM approximation is not used, it requires $2N$ integral evaluations. On the other hand, the computation of the Jacobian of the nonlinear vector requires $2N\times n_p$ integral evaluations without DEIM, but it is only $m\times n_p$ with DEIM approximation.

For the details of the computation of the reduced non-linear vectors we refer to the greedy DEIM algorithm  \cite{chaturantabut10nmr}.  For continuous finite element and finite volume discretizations, the number of flops for the computation of bilinear form and nonlinear term depends on the maximum number of neighbor cells \cite{Drohmann12}. In the case of dG discretization, due to its local nature, it depends only on the number of nodes in the local cells. For instance, in the case of SIPG with linear elements ($n_q=3$), it contributes only 3 nonzero integrals for each degree of freedom, hence 3 integrals have to be computed  on a single triangular element \cite{Karasozen15}, whereas in the case of continuous finite elements, integral computations on $6$ neighbor cells are needed \cite{Heinkenschloss14}. Since the AVF method is an implicit time integrator, at each time step, a non-linear system of equations has to be solved by Newton's method. The reduced Jacobian has a block diagonal structure for the SIPG discretization, which is easily invertible \cite{Karasozen15}, and requires only $O(n_qN)$ operations with DEIM.

Because the nonlinearity in the ROM \eqref{rom_pr} is approximated  by DEIM, the discrete energy
is also preserved approximately in \eqref{deim}, not exactly as for the ROM \eqref{rom_pr}, since we have that

\begin{align*}
\frac{d}{dt}E_h(\bm{U}\bm{z}^r) =& \; [ \nabla_{\bm{z}^r} E_h(\bm{U}\bm{z}^r) ]^T \frac{d}{dt}\bm{z}^r \\
=& \;  [ \bm{U}^T\nabla_{\bm{z}} E_h(\bm{U}\bm{z}^r) ]^T \bm{J}_r[ \bm{U}^T \bm{A}\bm{z}^r +  \bm{U}^T\hat{\bm{b}}( \bm{U}\bm{z}^r)]\\
\neq& \; [ \bm{U}^T\nabla_{\bm{z}} E_h(\bm{U}\bm{z}^r) ]^T \bm{J}_r[ \bm{U}^T\nabla_{\bm{z}} E_h(\bm{U}\bm{z}^r)].
\end{align*}
But an upper  bound for the preservation of the discrete energy by POD-DEIM can be derived as follows:

\begin{align*}
\frac{d}{dt}E_h(\bm{U}\bm{z}^r) =& \; [ \nabla_{\bm{z}^r} E_h(\bm{U}\bm{z}^r) ]^T \frac{d}{dt}\bm{z}^r \\
=& \;  [ \bm{U}^T\nabla_{\bm{z}} E_h(\bm{U}\bm{z}^r) ]^T \bm{J}_r[ \bm{U}^T\bm{A}\bm{z}^r +  \bm{U}^T\hat{\bm{b}}( \bm{U}\bm{z}^r)] \\
=& \;  [ \bm{U}^T\nabla_{\bm{z}} E_h(\bm{U}\bm{z}^r) ]^T \bm{J}_r[ \bm{U}^T\bm{A}\bm{z}^r +  \bm{U}^T\hat{\bm{b}}( \bm{U}\bm{z}^r) - \bm{U}^T\nabla_{\bm{z}} E_h(\bm{U}\bm{z}^r)],
\end{align*}
where in the last row, we have added $- \bm{U}^T\nabla_{\bm{z}} E_h(\bm{U}\bm{z}^r)$ which does not affect the identity since $[ \bm{U}^T\nabla_{\bm{z}} E_h(\bm{U}\bm{z}^r) ]^T\bm{J}_r[ \bm{U}^T\nabla_{\bm{z}} E_h(\bm{U}\bm{z}^r) ]=0$. Using that $\bm{U}^T\nabla_{\bm{z}} E_h(\bm{U}\bm{z}^r) = \bm{U}^T \bm{A}\bm{z}^r +  \bm{U}^T\bm{b}( \bm{U}\bm{z}^r)$, we obtain

\begin{equation*}
\frac{d}{dt}E_h(\bm{U}\bm{z}^r) =  [ \bm{U}^T\nabla_{\bm{z}} E_h(\bm{U}\bm{z}^r) ]^T \bm{J}_r\bm{U}^T[ \hat{\bm{b}}( \bm{U}\bm{z}^r) - \bm{b}( \bm{U}\bm{z}^r)],
\end{equation*}

\begin{equation}\label{bound_deim}
\left\| \frac{d}{dt}E_h(\bm{U}\bm{z}^r) \right\| \leq \|\nabla_{\bm{z}} E_h(\bm{U}\bm{z}^r)\| \|\bm{U}\|^2 \|\bm{J}_r\| \|\hat{\bm{b}}(\bm{U}\bm{z}^r) - \bm{b}( \bm{U}\bm{z}^r)\|.
\end{equation}
Finally, using the DEIM approximation error \cite[Lemma 3.2]{chaturantabut10nmr}

$$
\|\hat{\bm{b}}(\bm{U}\bm{z}^r) - \bm{b}( \bm{U}\bm{z}^r)\| \leq \|(\bm{P}^T\bm{Q})^{-1}\| \|(\bm{I}-\bm{Q}\bm{Q}^T)\bm{b}( \bm{U}\bm{z}^r)\|,
$$
we obtain an upper bound for the change of the energy with respect to time:

\begin{equation}\label{bound_deim2}
\left\| \frac{d}{dt}E_h(\bm{U}\bm{z}^r) \right\| \leq \|\nabla_{\bm{z}} E_h(\bm{U}\bm{z}^r)\| \|\bm{U}\|^2 \|\bm{J}_r\| \|(\bm{P}^T\bm{Q})^{-1}\| \|(\bm{I}-\bm{Q}\bm{Q}^T)\bm{b}( \bm{U}\bm{z}^r)\|.
\end{equation}
According to the bound \eqref{bound_deim2}, the discrete energy is preserved since $\|\frac{d}{dt}E_h\|\rightarrow 0$ as $\|(\bm{I}-\bm{Q}\bm{Q}^T)\bm{b}( \bm{U}\bm{z}^r)\| \rightarrow 0$, and that $\bm{Q}$ is orthonormal. We remark  that similar error bounds for the reduced order energy are derived for the symplectic DEIM in \cite{Hesthaven16,Peng16}.

\subsubsection{Dynamic mode decomposition (DMD)}

The DMD extracts dynamically relevant spatio-temporal information content from a numerical or experimental data sets \cite{Kutz16}. It is a powerful equation-free, data-driven method to analyze complex systems. Without explicit knowledge of the dynamical system, the DMD algorithm determines eigenvalues, eigenmodes, and spatial structures for each mode. The order of orthogonal POD modes are  decreasing  by the energy level of POD singular values. The DMD modes are not orthogonal and each DMD mode is associated with a growth rate and each mode has a single frequency. Again the DMD modes are ordered in form of decreasing energy; unlike the POD modes, they give the energy fluctuations at different frequencies.  After building the DMD basis functions of rank $m$,  we approximate the full dimensional nonlinear part $\bm{b}( \bm{U}\bm{z}^r)$ of the nonlinear term $\bm{b}^r=\bm{J}_r\bm{U}^T\bm{b}( \bm{U}\bm{z}^r)$ in \eqref{rom_pr} following  \cite{Alla16a,Alla16}.

DMD is a special case of the Koopman operator  \cite{Koopman31} approximating nonlinear systems via an associated infinite dimensional system. The connection between the DMD and Koopman operator was  established  in \cite{Mezic13,Rowley12,Schmid10dmd} . The Koopman operator ${\mathcal K}$ acts on a set of scalar observable functions $g:{\mathcal M} \rightarrow {\mathbb C}$

$$
{\mathcal K} g({\bm y}) = g({\bm N}({\bm y})) ,
$$
for the nonlinear dynamical system

\begin{equation}\label{nonlin}
\frac{d{\bm y}}{dt} = {\bm N}({\bm y}),
\end{equation}
where ${\bm y}\in {\mathcal M}$, an n-dimensional manifold. The DMD determines the Koopman eigenvalues and  modes directly from the data, when the observable is considered as state space, $g({\bm y}) = {\bm y}$. For construction of the DMD modes, we follow (Chapter 1,\cite{Kutz16} ).  The nonlinear system \eqref{nonlin} is approximated locally by the following linear system

\begin{equation}
\frac{d{\bm y}}{dt} = \mathcal{A}{\bm y},
\end{equation}
 by constructing the discrete dynamical system

$$
\bm{y}_{i + 1} = \bm{A} \bm{y}_i, \quad i = 0,1, \dots, J,
$$
for $J+1$ trajectories, and $ \bm{A} = exp(\mathcal{A}\Delta t)$. Low rank eigen-decomposition of the matrix $\bm{A}$ is constructed  by minimizing

\begin{equation} \label{minimization}
\| \bm{y}_{i + 1} - \bm{A} \bm{y}_i \|_2,
\end{equation}
in the least squares sense for all trajectories  $\bm{y}_i,\;  i = 0,1, \dots, J$. For minimization of the approximation error \eqref{minimization},  we consider the  snapshot matrices ${\bm G}$ and ${\bm G}'$ formed as:

$$
{\bm G} = [ \bm{y}_0, \cdots, \bm{y}_{J-1} ], \qquad   {\bm G}'   = [ \bm{y}_1, \cdots, \bm{y}_{J} ].
$$
Then,  we find the unknown matrix ${\bm A}_G$ satisfying ${\bm G }' = {\bm A}_G   {\bm G }$, which is the solution of the minimization problem in the Frobenius norm

\begin{align*}
\min   \left\| {\bm G }' -  {\bm A}_G   {\bm G }\right\|_{F}^2,
\end{align*}
so that ${\bm A}_G = {\bm G}'  {\bm G}^{\dag}$, where $\dag$ denotes the Moore-Penrose pseudo inverse. The DMD modes are computed by the exact DMD algorithm \cite{Tu14}.

 \begin{algorithm}[htb!]
 \caption{Exact DMD Algorithm \label{exactDMD_alg}}
 \begin{algorithmic}
 \STATE Given the snapshot matrices ${\bm G}$ and ${\bm G}'$
 \STATE Compute rSVD (Algorithm \ref{rsvd}) of ${\bm G}$,  ${\bm G} = U\Sigma V^{*}$.
 \STATE Define $\tilde{\bm A}_G = U^{*} {\bm G}' V \Sigma^{-1}$.
 \STATE Find eigenvalues and eigenvectors of $\tilde{\bm A}_G W=  W \Lambda$.
 \STATE Set DMD modes $U^{\text{DMD}}:={\bm G}'V\Sigma^{-1}W$.
 \end{algorithmic}
 \end{algorithm}

In our case, the exact DMD algorithm, Algorithm \ref{exactDMD_alg},  is applied to the  snapshot matrices ${\bm G}$ and ${\bm G}'$ formed by the snapshots of the nonlinear terms
${\bm b}(\bm{z})$ in the FOM \eqref{fom} at $N_T+1$ equally spaced time instances as:

$$
{\bm G} = [ {\bm b}(\bm{z}_0), \cdots, {\bm b}(\bm{z}_{N_T-1}) ], \qquad   {\bm G}'   = [ {\bm b}(\bm{z}_1), \cdots, {\bm b}(\bm{z}_{N_T}) ].
$$
Then, we obtain the time dependent DMD approximation to the nonlinear part ${\bm b}(\bm{U}\bm{z}^r(t))$ in the ROM \eqref{rom_pr} as

\begin{equation}
 {\bm b}(\bm{U}\bm{z}^r(t)) \approx \bm{b}_{\text{dmd}}^r(t) = \sum_{j=1}^{m}\alpha_jU_j^{\text{DMD}}(z) \exp(\omega_ j t) = U^{\text{DMD}}\text{diag}(e^{\omega^{\text{DMD}} t})\alpha ,
 \end{equation}
 where $ U^{\text{DMD}}=[U_1,\ldots,U_m ]$ are DMD basis functions of rank $m$ of the nonlinear vector ${\bm b}(\bm{z})$,  $\alpha=[\alpha_1,\ldots, \alpha_m]$  is the initial vector $\alpha = (U^{\text{DMD}})^{\dag}{\bm b}(\bm{z}_0)$  and   $\omega_j=\log{(\lambda_j)}/\Delta t$, $j=1,\ldots m$. After plugging this term into \eqref{rom_pr}, we obtain the following linear ROM:

\begin{equation}\label{dmd}
 \frac{d}{dt} \bm{z}^r  =  \bm{A}^r \bm{z}^r +  \bm{J}_r\bm{U}^T\bm{b}_{\text{dmd}}^r(t).
\end{equation}

The POD-DMD ROM \eqref{dmd} corresponds to the semi-discretized linear Schr\"odinger equation with an additional time dependent term coming from the linearization by POD-DMD approximation. The reduced energy of the POD-DMD  contains only quadratic terms in (3b) like linear Schr\"odinger equation  \cite{Faou09}. The AVF method is equivalent to the midpoint rule for quadratic potentials which is also energy preserving \cite{Celledoni12ped,Cohen11}. Because the POD-DMD approximation of the cubic nonlinearity of the NLSE is reduced to a time dependent function appearing as additional term in the linear  Schr\"odinger equation, the mass and energy are preserved with a lower accuracy than  by POD-DEIM, which is shown by the numerical results in the next section. Although the dimension of the system \eqref{dmd} is the same as \eqref{deim}, the main advantage of the system \eqref{dmd} is that it is linear and we do not need to use the Newton's method. Therefore the POD-DMD is significantly much faster than POD and POD-DEIM.

\section{Numerical results}
\label{Sec:numeric}

In this section we present numerical results for the NLSE \eqref{p1} on a 2D localized rectangle  $\Omega = [a,b]^2$ with periodic boundary conditions. In all simulations we use linear dG basis functions on a uniform $32\times 32$ rectangular grid with $2048$ triangular elements. Because the NLSE generate wave type solutions, it is not possible to capture the dynamics with few modes. For numerical simulations, the number of POD modes are fixed according to the relative information content or energy criterion $\varepsilon_k$ given by

\begin{equation}\label{ric}
\varepsilon_k = \frac{\sum_{i=1}^k \sigma_i^2}{\sum_{i=1}^{d_z} \sigma_i^2},
\end{equation}
where $\sigma_i$ is the $i$-th singular value in the SVD of the snapshot matrix $\mathcal{Z}$ for computing the matrix $\bm{U}$ of POD modes, and $d_z$ is the rank of the snapshot matrix $\mathcal{Z}$. We set the number of POD modes as $\min_{k}\varepsilon_k>0.9999$ which sufficiently reflects the system characteristics. In the POD and DEIM/DMD modes computation procedure, we use rSVD algorithm, Algorithm \ref{rsvd}, for the computation of SVD with the random matrix $\Omega$ and  the oversampling parameter $p=2$ . In all numerical experiments we have used for the FOM and ROMs the same time-step sizes. All simulations are performed on a Windows 10 machine with Intel Core i7, 2.5 GHz and 8 GB using MATLAB R2014.

\subsection{Defocusing NLSE with progressive wave solutions}
\label{ex1}

We first consider defocusing NLSE \eqref{p1} ($\beta =2$)  with the progressive plane wave solution \cite{Wang13})

\[
\psi (t,\mathbf{x}) = A \exp\big( i (c_1 x + c_2 y - \omega t) \big),
\]
where $\omega=c_1^2 + c_2^2 - \beta |A|^2$ and $\alpha = 2$. The initial data is evaluated by taking $t =0$ using the exact solution. Numerical solutions are obtained with the linear dG elements in the spatial domain $\Omega=[0,2 \pi]^2$ and for the final time $T=5$, with the spatial and temporal mesh sizes $h=\pi/16$ and $\tau =0.001$, respectively. The parameter values are taken as $A=1$, $c_1=1$ and $c_2=1$.

\begin{figure}[htb!]
\centering
\subfloat{\includegraphics[scale=0.35]{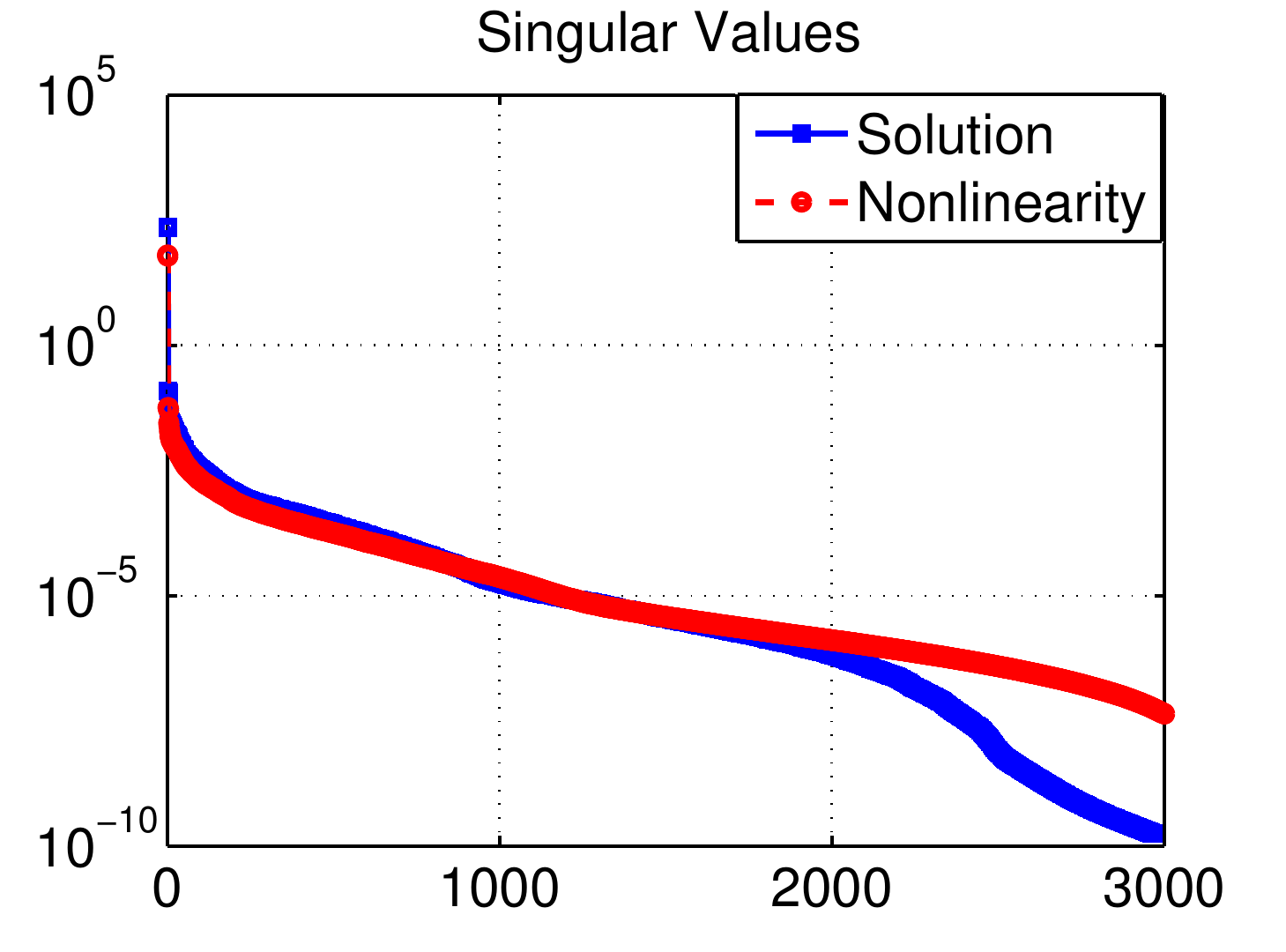}}
\caption{Example~\ref{ex1}: Decay of singular values\label{ex1_svd}}
\end{figure}

In Fig.~\ref{ex1_svd} the decay of singular values are shown for the snapshot matrices of the solutions and nonlinear terms. At the very beginning a steep decay of singular values is observed. With increasing number of the modes, the singular values decay rather slowly. We choose in the ROM the number of POD modes $k=10$ which satisfies the energy criterion \eqref{ric} with  $\varepsilon_{10}>0.9999$. In Fig.~\ref{ex1_plots} the  FOM solution and the errors between FOM and ROMs are shown at the final time. The number of DEIM and DMD modes are chosen to obtain the same level of accuracy for the error  between the FOM and ROM solutions \cite{Alla16}. Both POD-DEIM and POD-DMD approximations with 15 DEIM/DMD modes for 10-rank truncation (POD modes) have almost the same accuracy.

\begin{figure}[htb!]
\centering
\subfloat{\includegraphics[scale=0.3]{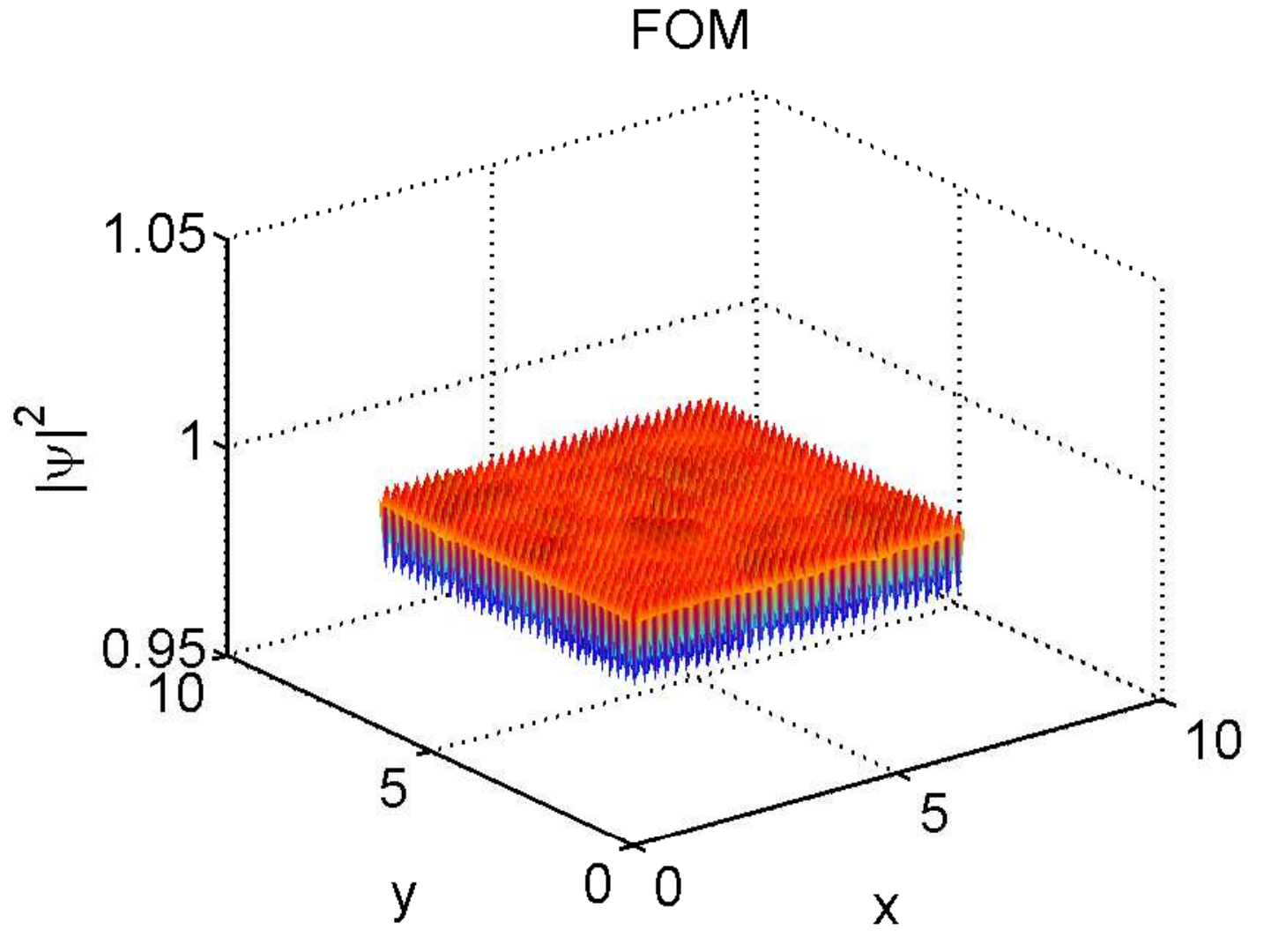}}
\subfloat{\includegraphics[scale=0.3]{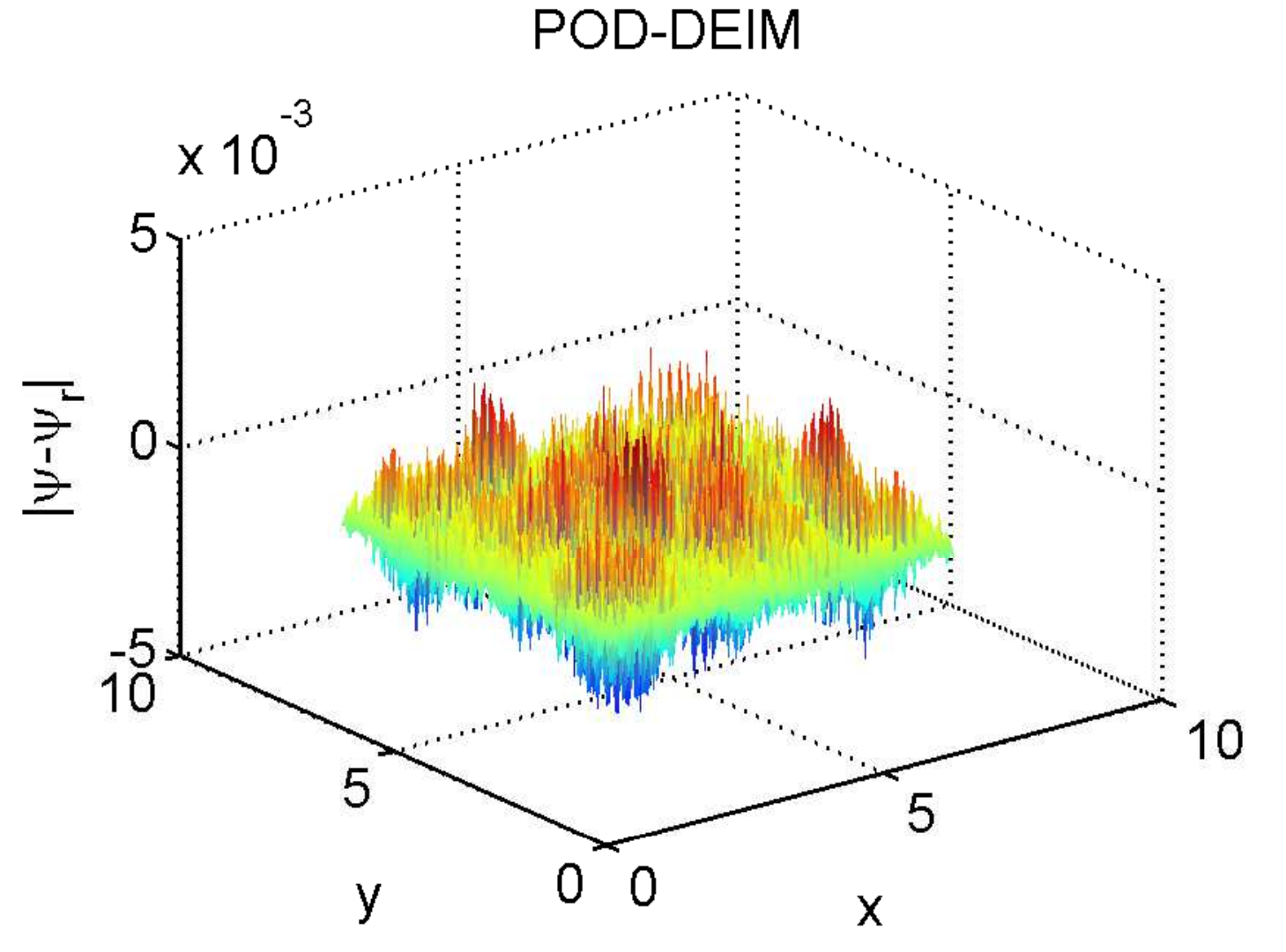}}
\subfloat{\includegraphics[scale=0.3]{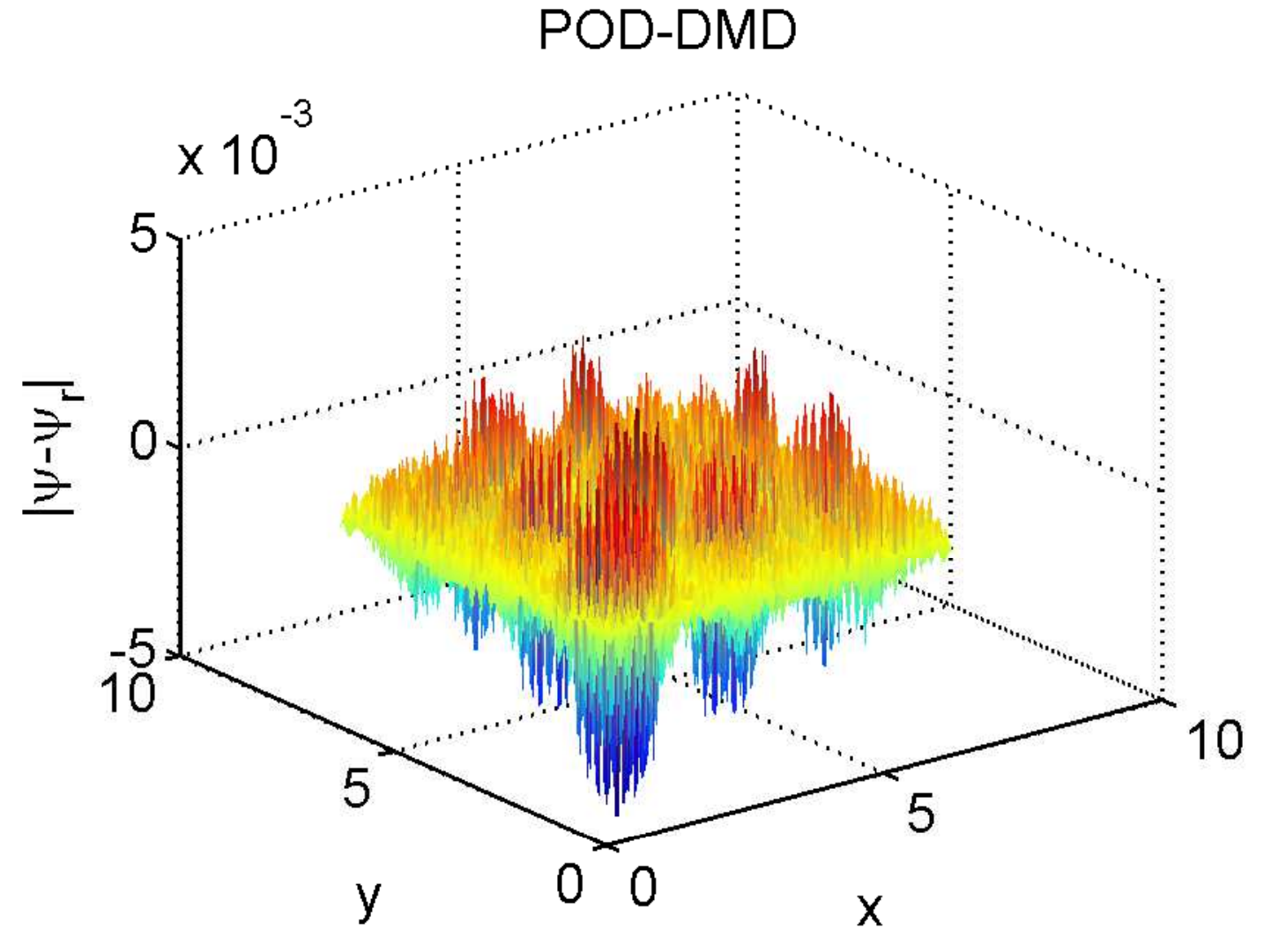}}
\caption{Example~\ref{ex1}: Solution profile by FOM (left), the errors between FOM and ROMs with 10 POD 15 DEIM (middle) and 10 POD 15 DMD (right) modes, at the final time $T=5$\label{ex1_plots}}
\end{figure}

In Fig.~\ref{ex1_energymass}, the energy and mass errors are shown for the FOM and the ROM in $L^{\infty}$-norm. The ROM errors are larger than the FOM errors as expected, but both mass and energy are well preserved.

\begin{figure}[htb!]
\centering
\subfloat{\includegraphics[scale=0.3]{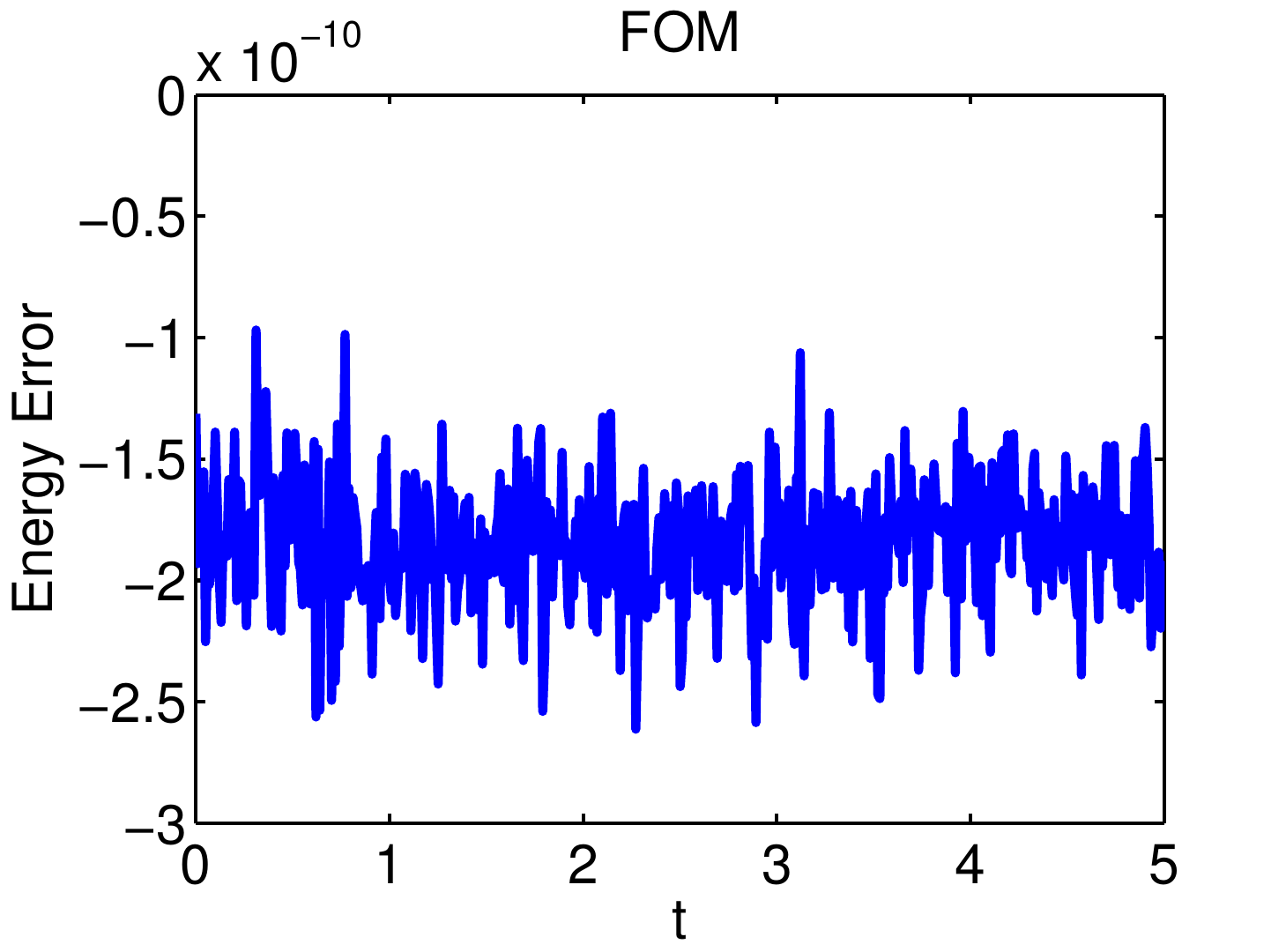}}
\subfloat{\includegraphics[scale=0.3]{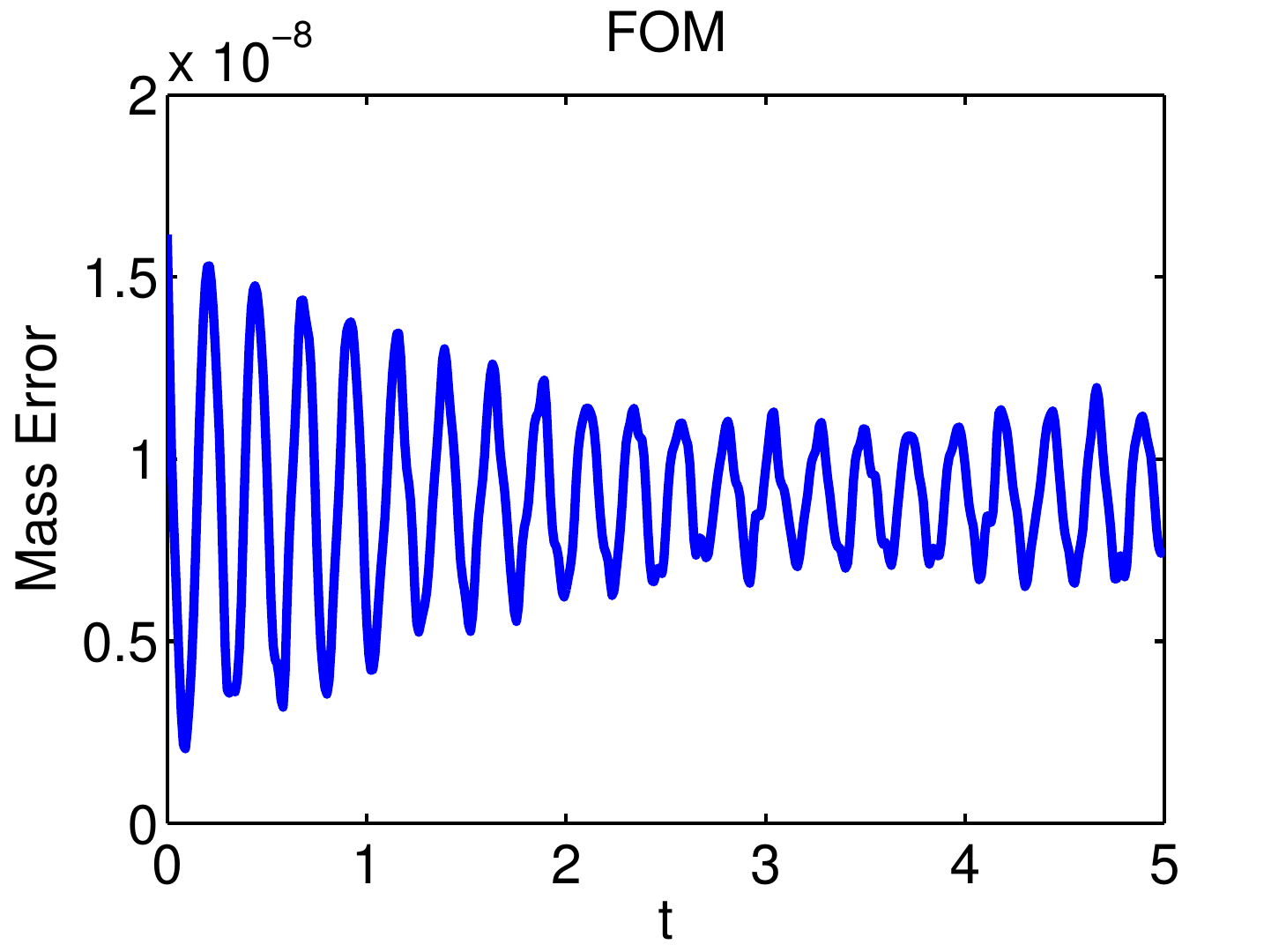}}

\subfloat{\includegraphics[scale=0.3]{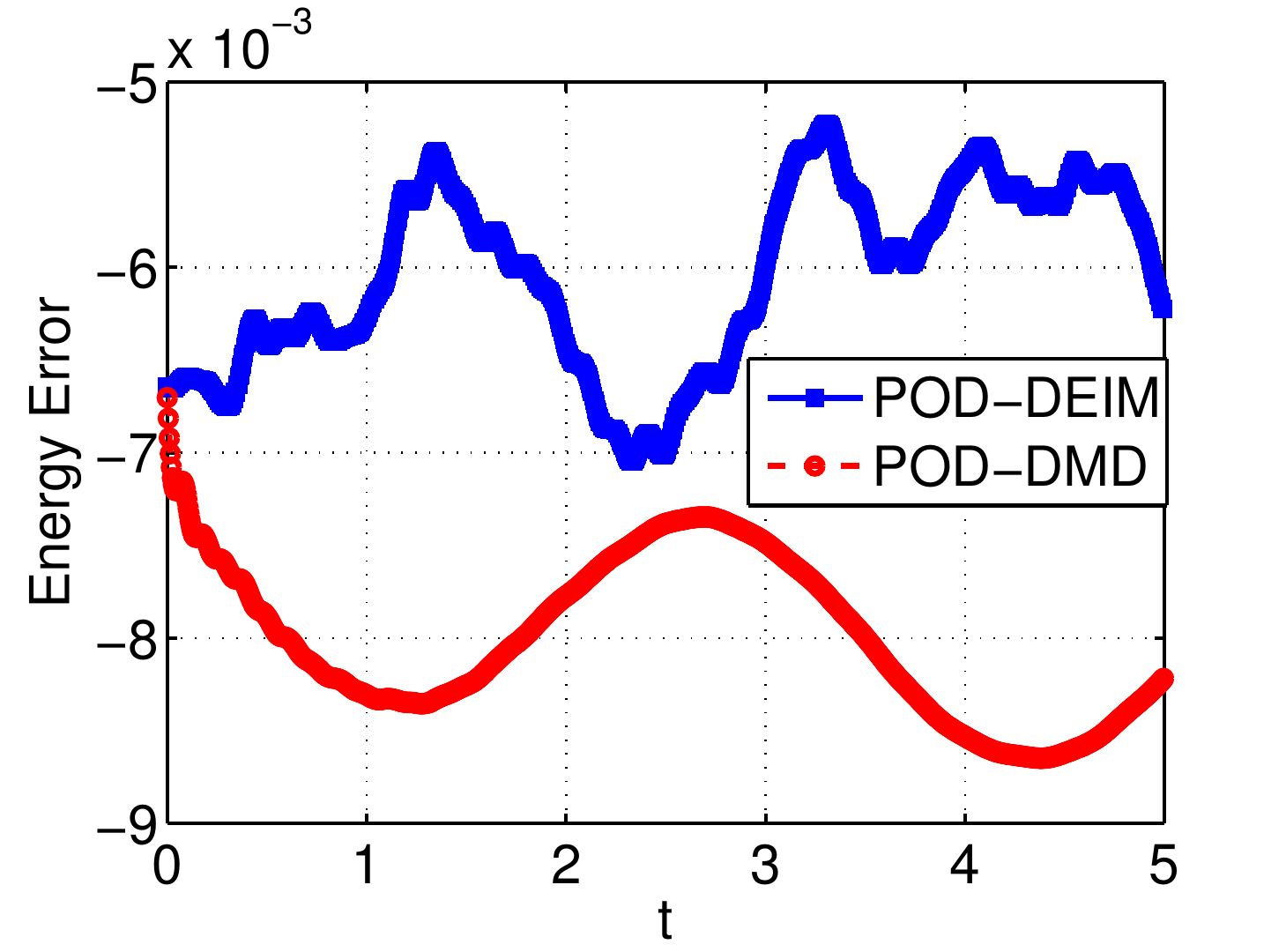}}
\subfloat{\includegraphics[scale=0.3]{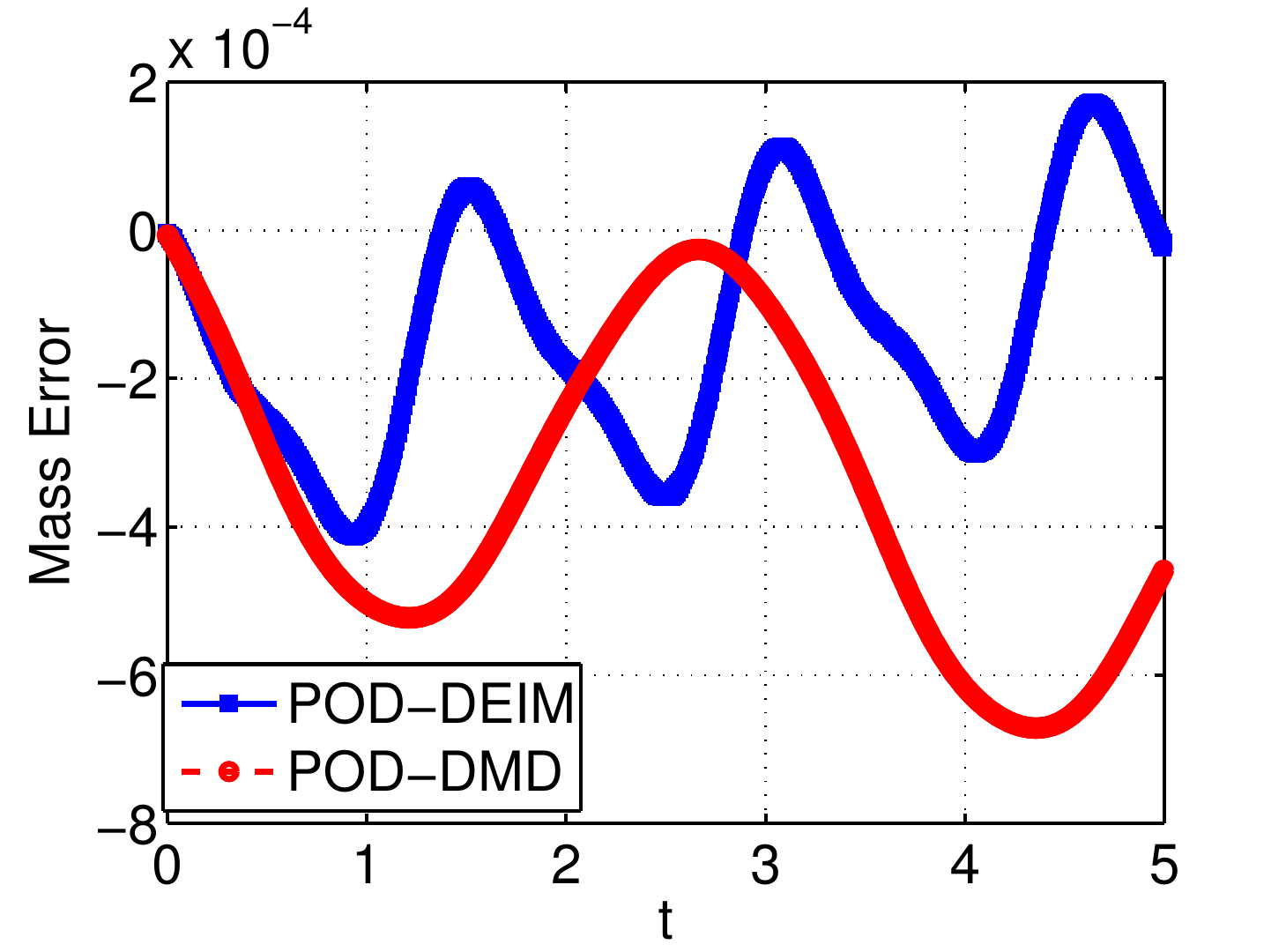}}
\caption{Example~\ref{ex1}: $L^{\infty}$-errors for the energy (left) and the mass (right) between initial ones by the solutions of FOM (top) and ROMs (bottom) with 10 POD and 15 DEIM/DMD modes\label{ex1_energymass}}
\end{figure}

On the other hand, for fixed number of POD modes $k=10$, the relative errors of the solution, of the discrete energy and of the discrete mass in Fig.~\ref{ex1_errors} decay monotonically with increasing number of the DEIM/DMD modes,  and after they stagnate. Similar results were obtained for 1D NLSE in \cite{Alla16}.
\begin{figure}[htb!]
\centering
\subfloat{\includegraphics[scale=0.3]{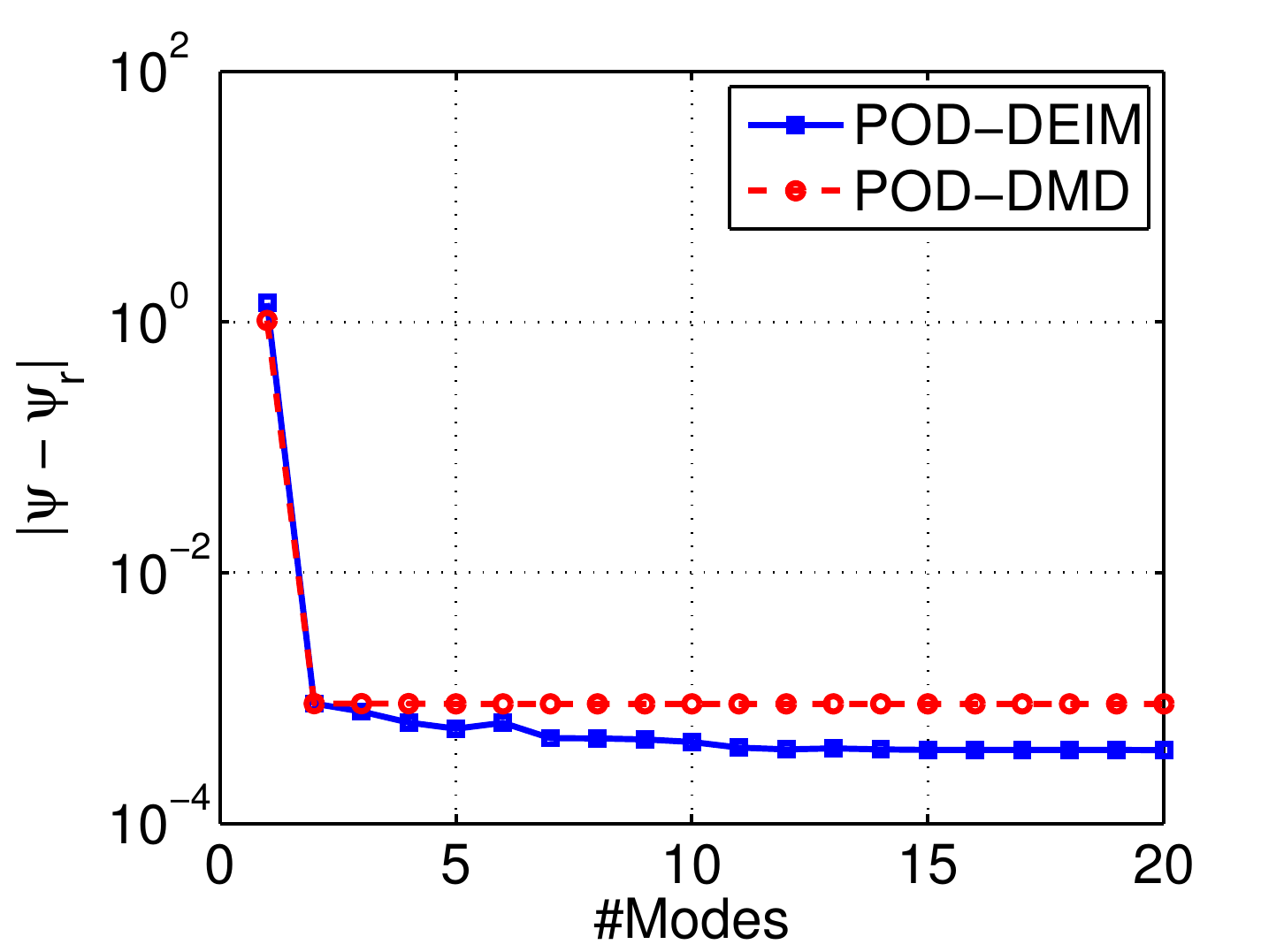}}
\subfloat{\includegraphics[scale=0.3]{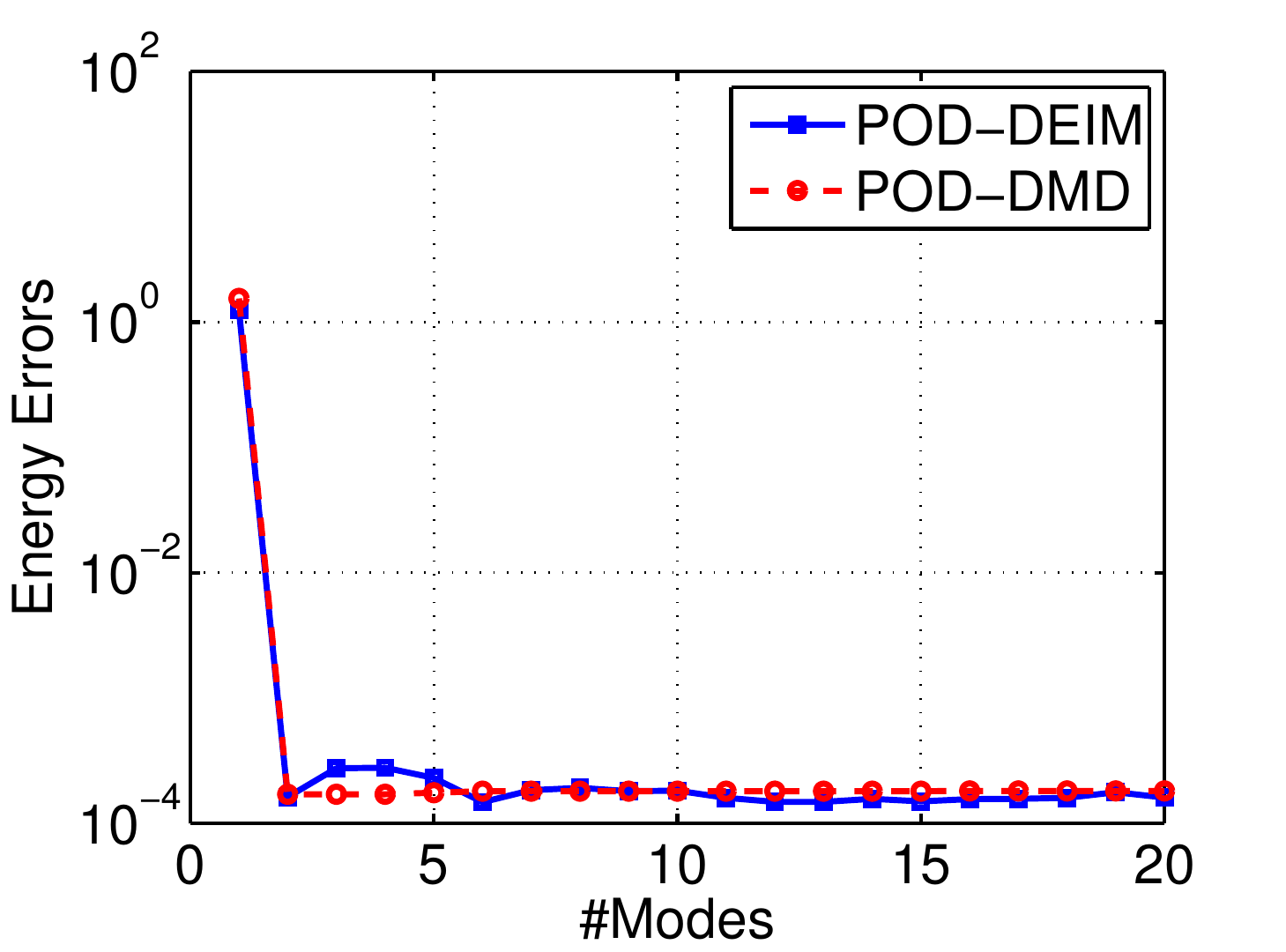}}
\subfloat{\includegraphics[scale=0.3]{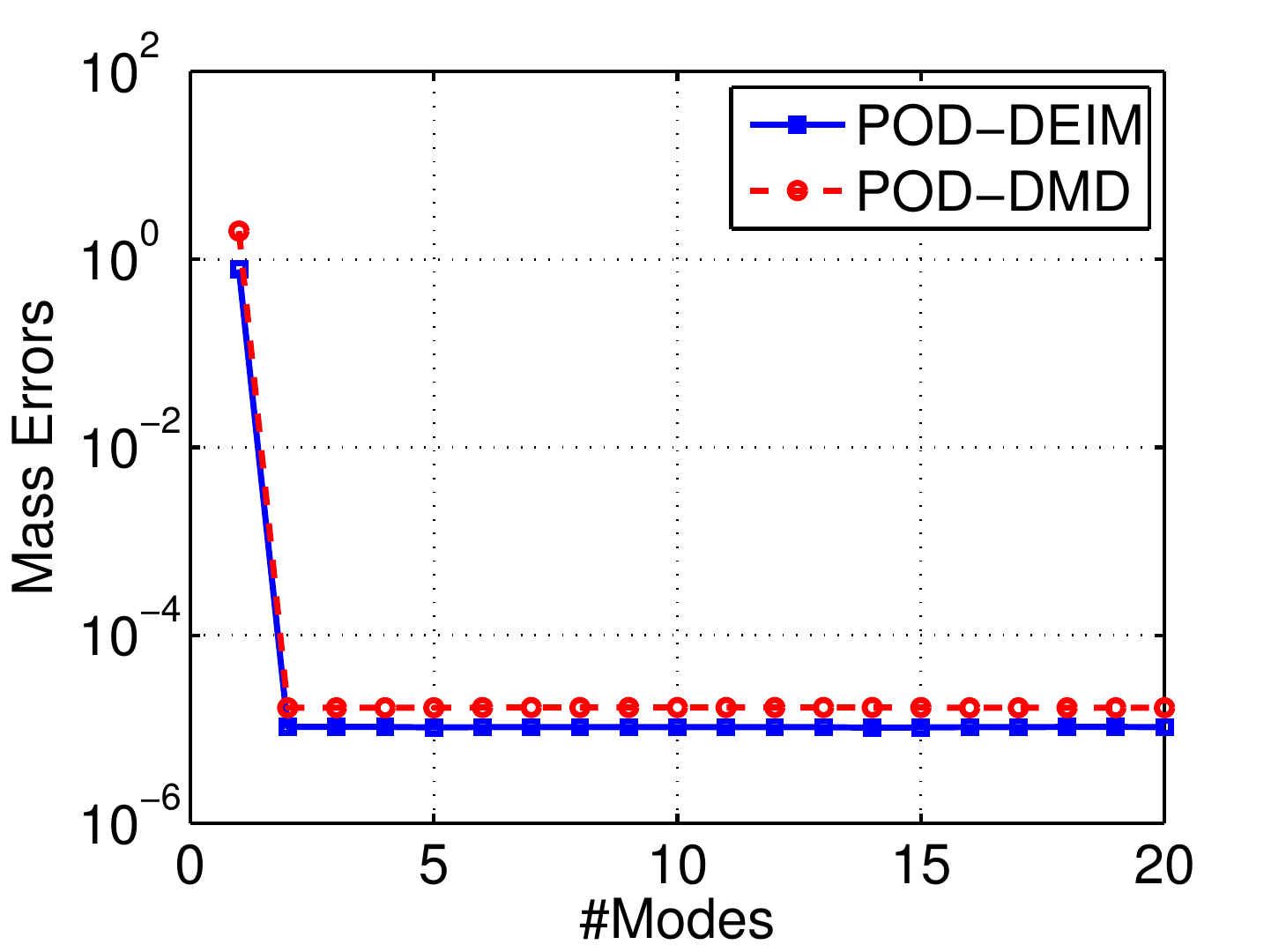}}
\caption{Example~\ref{ex1}: Relative $L^2$-$L^2$-errors for the solutions (left), and relative $L^\infty$-errors for the discrete energy (middle) and the discrete mass (right) by increasing number of DEIM/DMD modes, with 10 POD modes fixed\label{ex1_errors}}
\end{figure}

It can be clearly seen from Table~\ref{ex1_table} that the POD-DMD is much faster than the POD-DEIM, whereas there is no significant difference in the relative errors in Fig.~\ref{ex1_errors}. The advantage of the DMD lies in the fact that the ROM becomes linear, whereas in the case of DEIM, the ROM is still semi-linear and the nonlinearity have to be evaluated.

\begin{table}[H]
\centering
\caption{Example~\ref{ex1}: The computation time (in sec.) and speed-up factors for 10 POD and  15 DEIM/DMD modes}
\label{ex1_table}
\begin{tabular}{ l c c }
   & CPU Time (sec.) & Speedup \\
\hline
FOM            & 8024.7 & -  \\
ROM with DEIM  & 763.9 &  10.5   \\
ROM with DMD   & 8.2 &  974.3  \\
\hline
\end{tabular}
\end{table}

In Fig.~\ref{ex1_energymass_dt01}, we give the discrete energy and the discrete mass errors for the ROMs in $L^{\infty}$-norm, using a larger time-step size $\tau =0.01$ than for the FOM. The discrete energy and the discrete mass are still preserved  but with a lower accuracy.

\begin{figure}[htb!]
\centering
\includegraphics[scale=0.18]{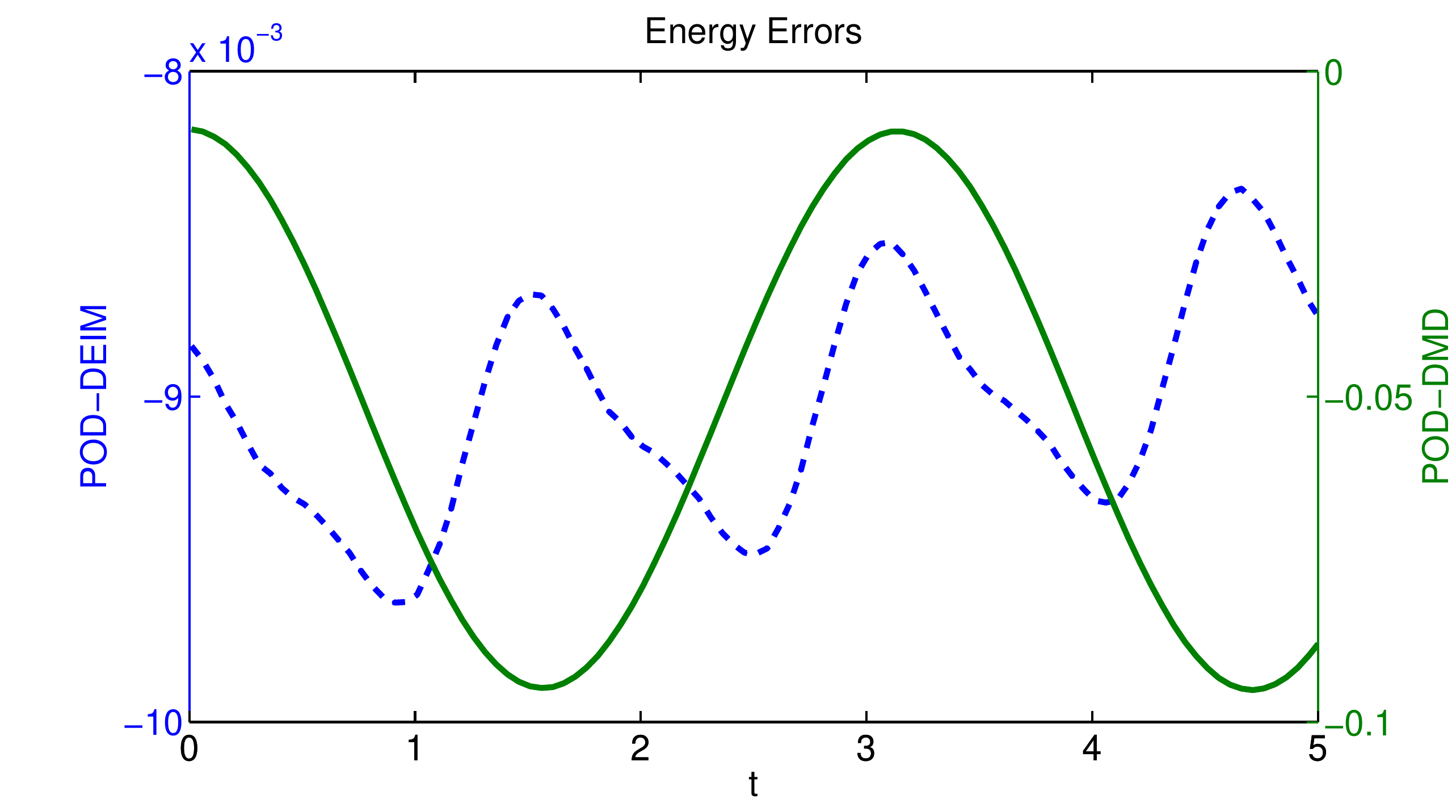}
\includegraphics[scale=0.18]{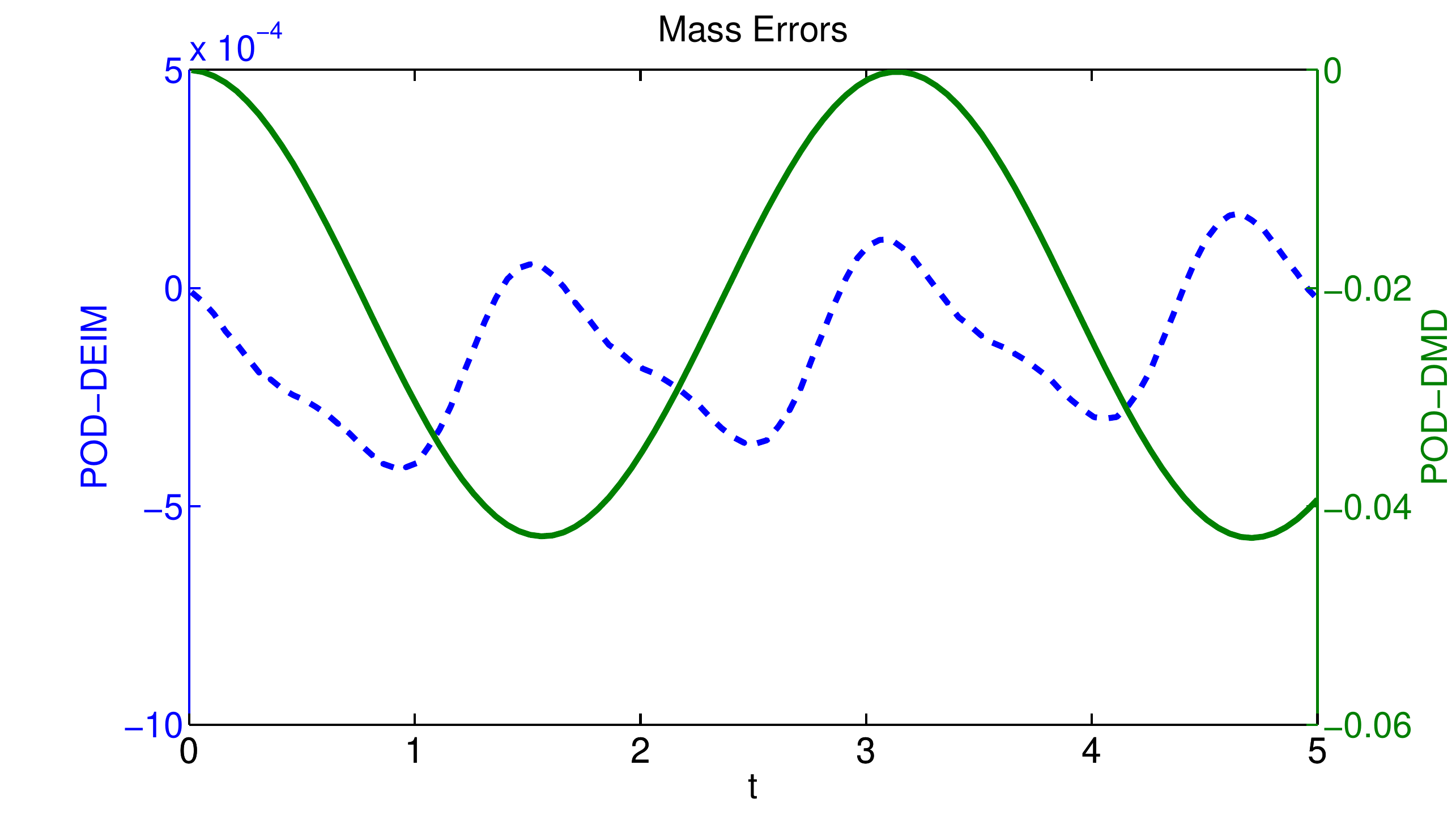}
\caption{Example~\ref{ex1}: $L^{\infty}$-errors for the discrete energy (left) and the discrete mass (right) by the ROMs with 10 POD, 15 DEIM/DMD modes, and  larger time-step size $\tau =0.01$\label{ex1_energymass_dt01}}
\end{figure}

We show in Figure \ref{ex1_energymassBE} that the discrete energy and the discrete mass are not conserved when a non-conservative time integrator like the backward Euler method is used.

\begin{figure}[htb!]
\centering
\subfloat{\includegraphics[scale=0.3]{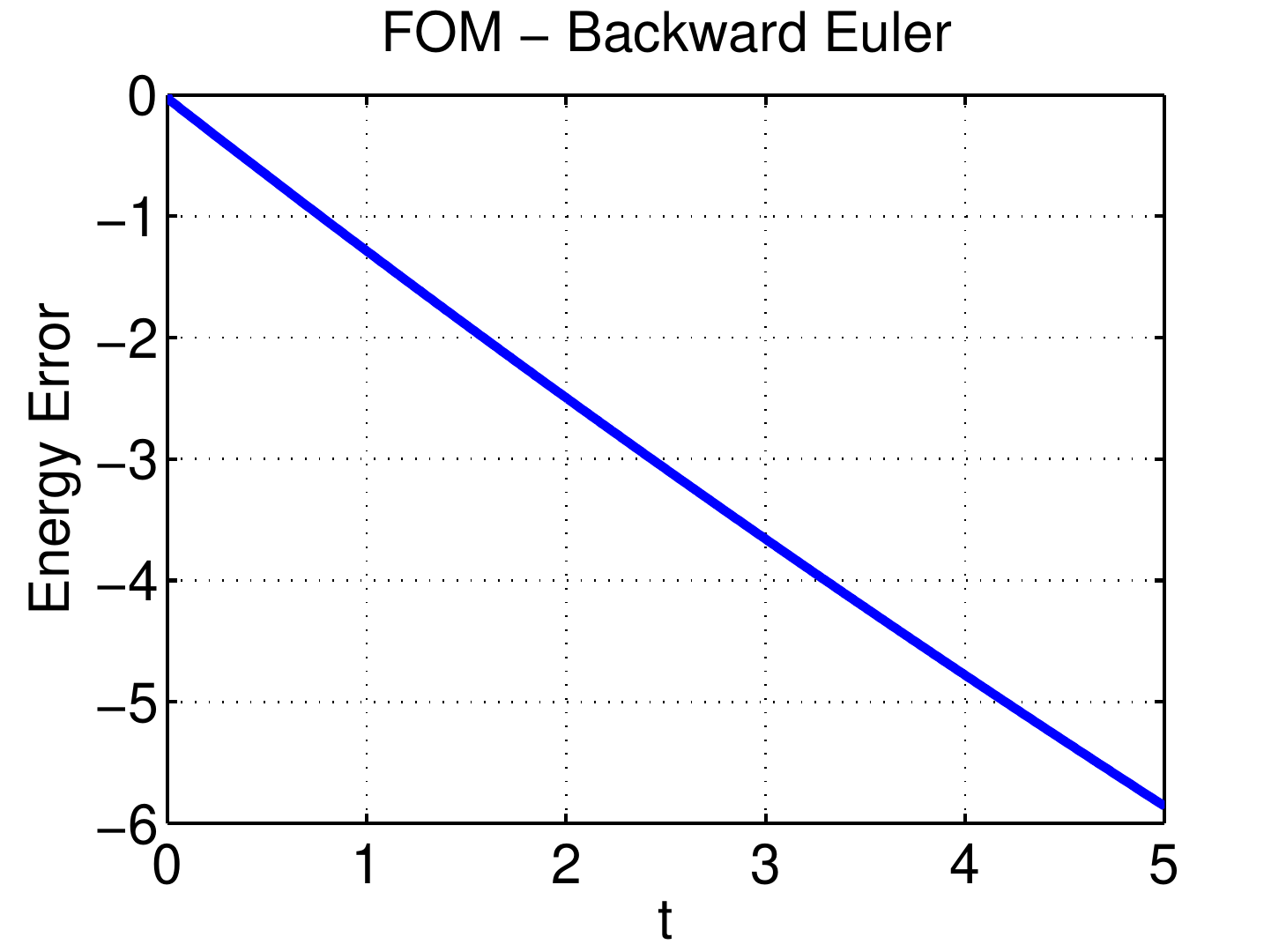}}
\subfloat{\includegraphics[scale=0.3]{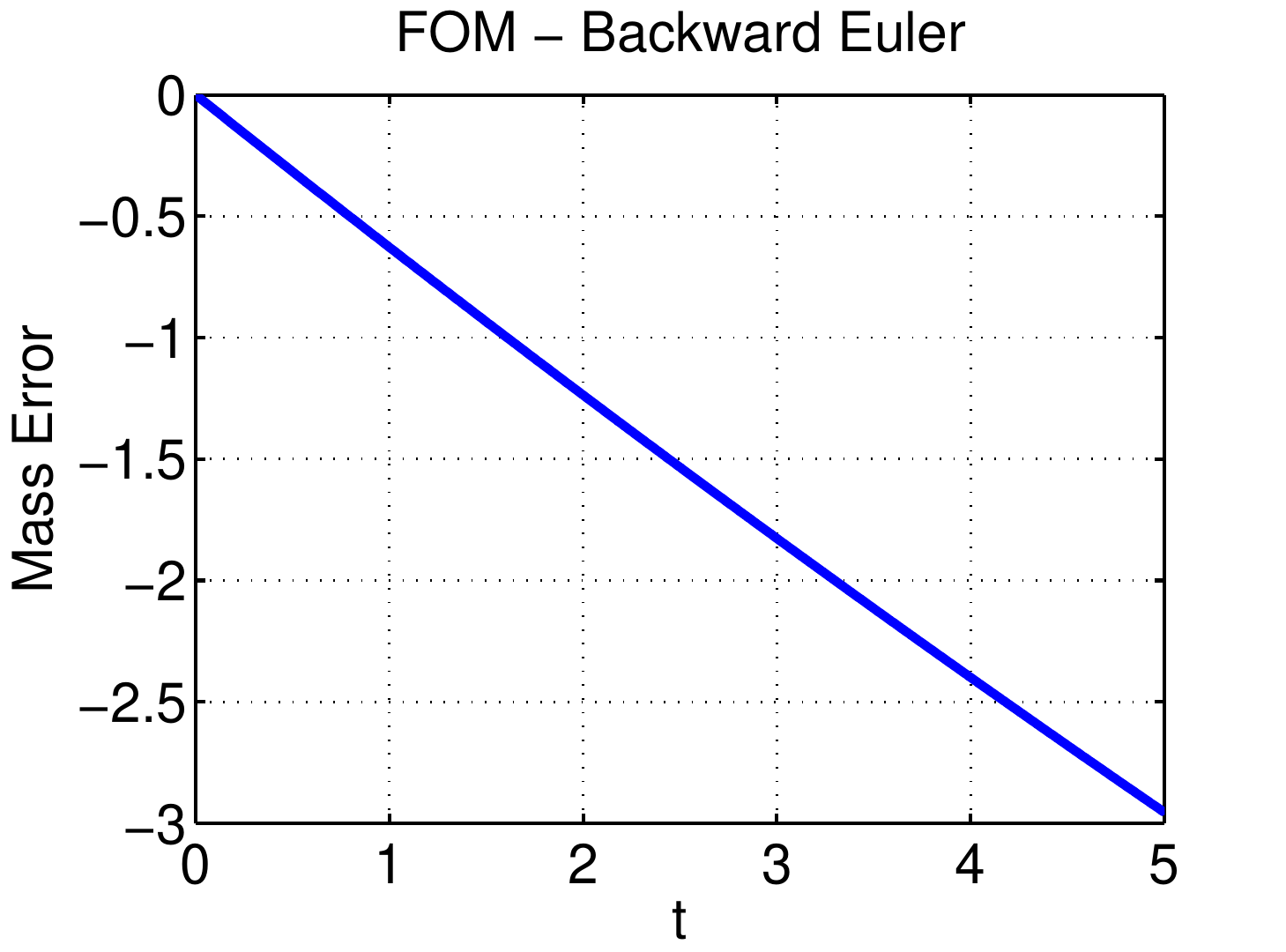}}
\caption{Example~\ref{ex1}: $L^{\infty}$-errors for the discrete energy (left) and the discrete mass (right) with the FOM solutions using backward Euler method.\label{ex1_energymassBE}}
\end{figure}

\subsection{NLSE with an external potential}
\label{ex2}

We consider for $\beta = 1$ the 2D defocusing NLSE \eqref{p1} with the harmonic  external trap potential  \cite{Galati13}

$$
V(x,y) = \frac{1}{2}(x^2  + 4y^2).
$$
The initial data is taken as the Gaussian

$$
\psi_0(\mathbf{x}) = \frac{1}{\sqrt{\pi}} e^{-\frac{(x^2 + y^2)}{2}},
$$
in the space-time domain $\Omega=[-8,8]^2$ and $T=3$, with the uniform spatial and temporal mesh sizes $h=0.5$ and $\tau =0.01$, respectively. The diffusion constant is set as $\alpha = 0.5$. The decay of the singular values in  Fig.~\ref{ex2_svd} is similar to the previous Example, i.e. NLSE without the external potential.
\begin{figure}[htb!]
\centering
\subfloat{\includegraphics[scale=0.35]{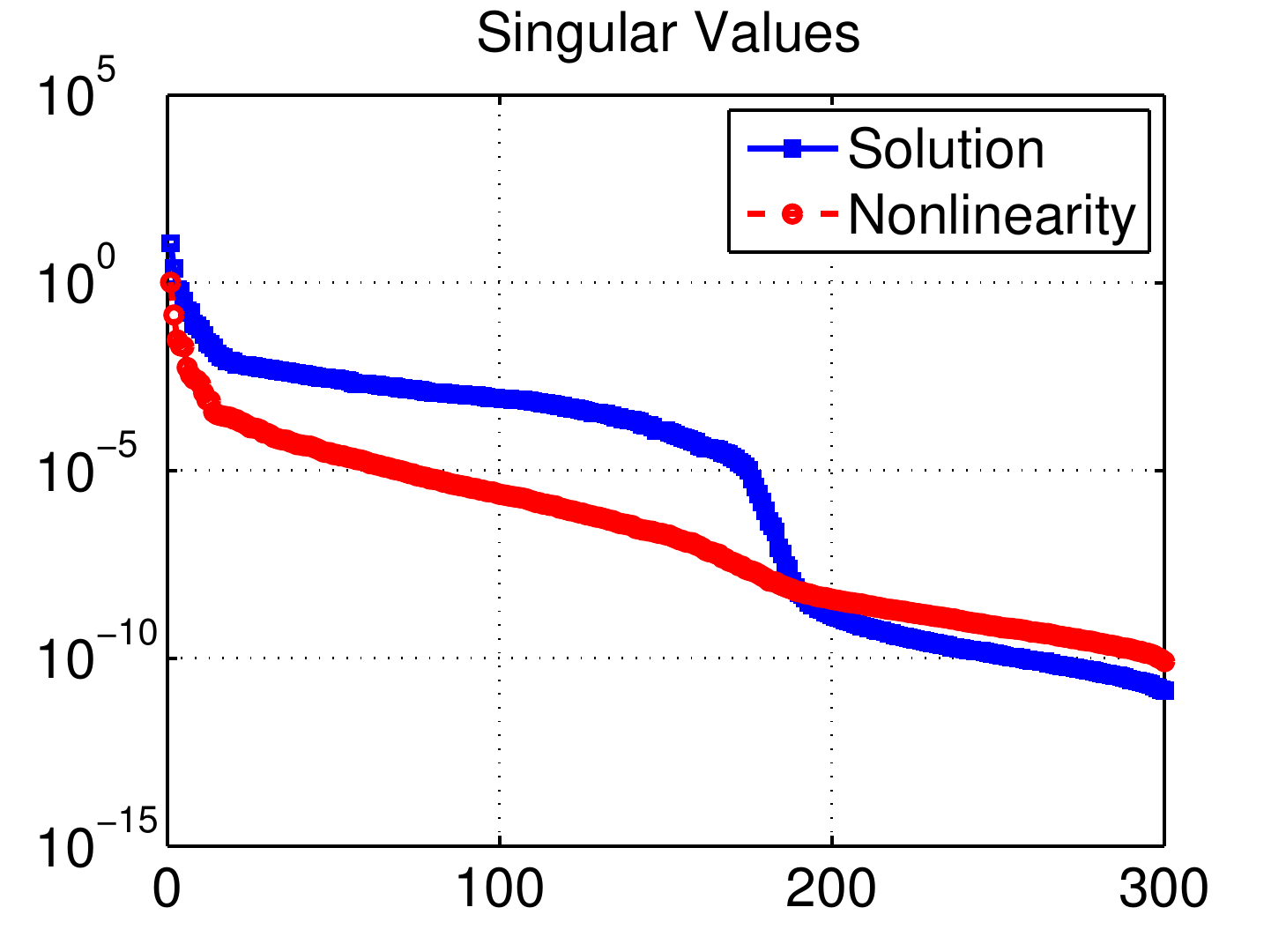}}
\caption{Example~\ref{ex2}: Decay of singular values\label{ex2_svd}}
\end{figure}

The POD-DEIM and POD-DMD solutions in Fig.~\ref{ex2_plots} have again the same accuracy, but requiring more POD, DEIM and DMD modes than the NLSE without the external potential. We set the number of POD modes $k=20$ according to the energy criterion $\varepsilon_{20}>0.9999$, further increase would not contribute a valuable accuracy.

\begin{figure}[htb!]
\centering
\subfloat{\includegraphics[scale=0.3]{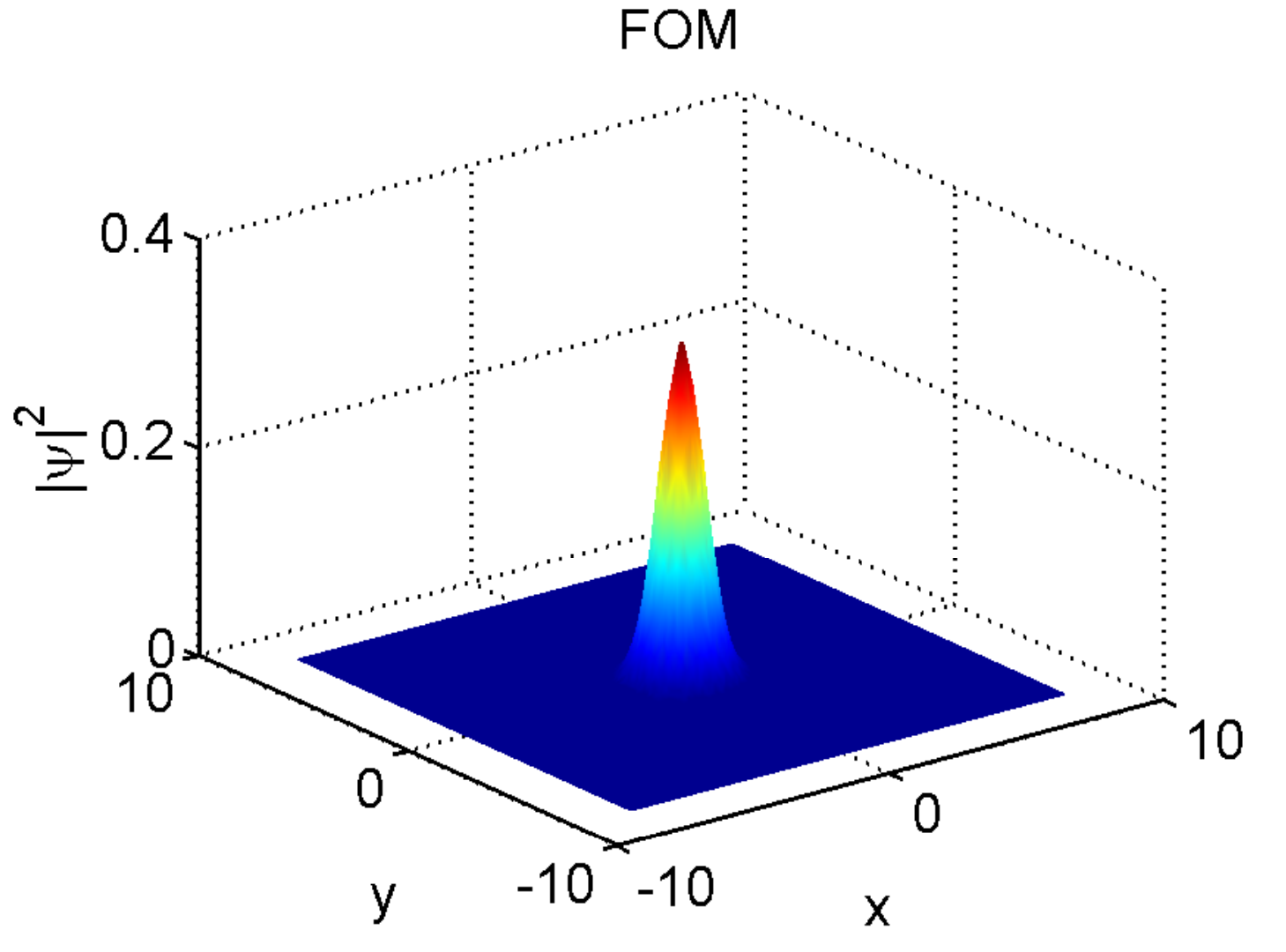}}
\subfloat{\includegraphics[scale=0.3]{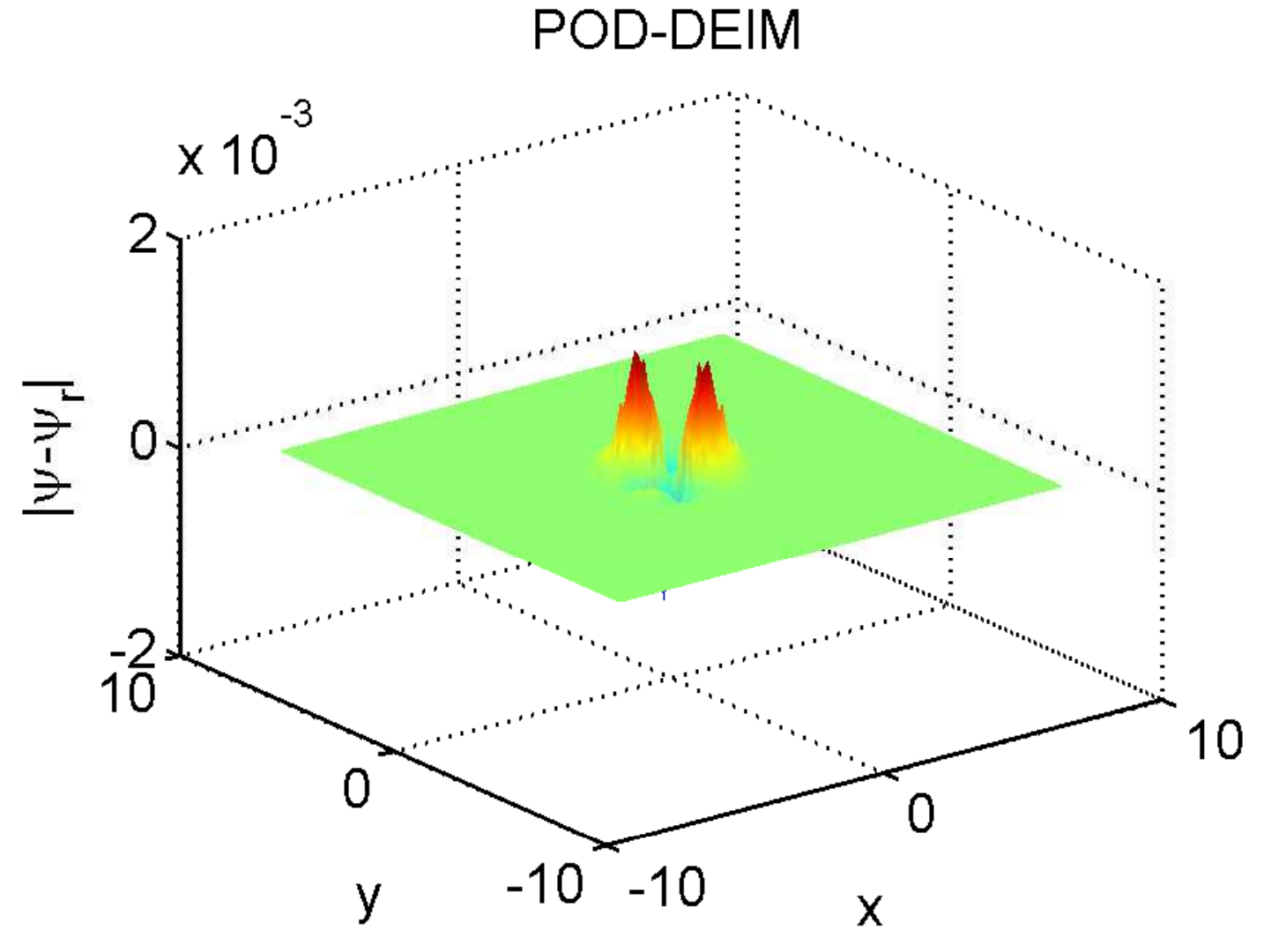}}
\subfloat{\includegraphics[scale=0.3]{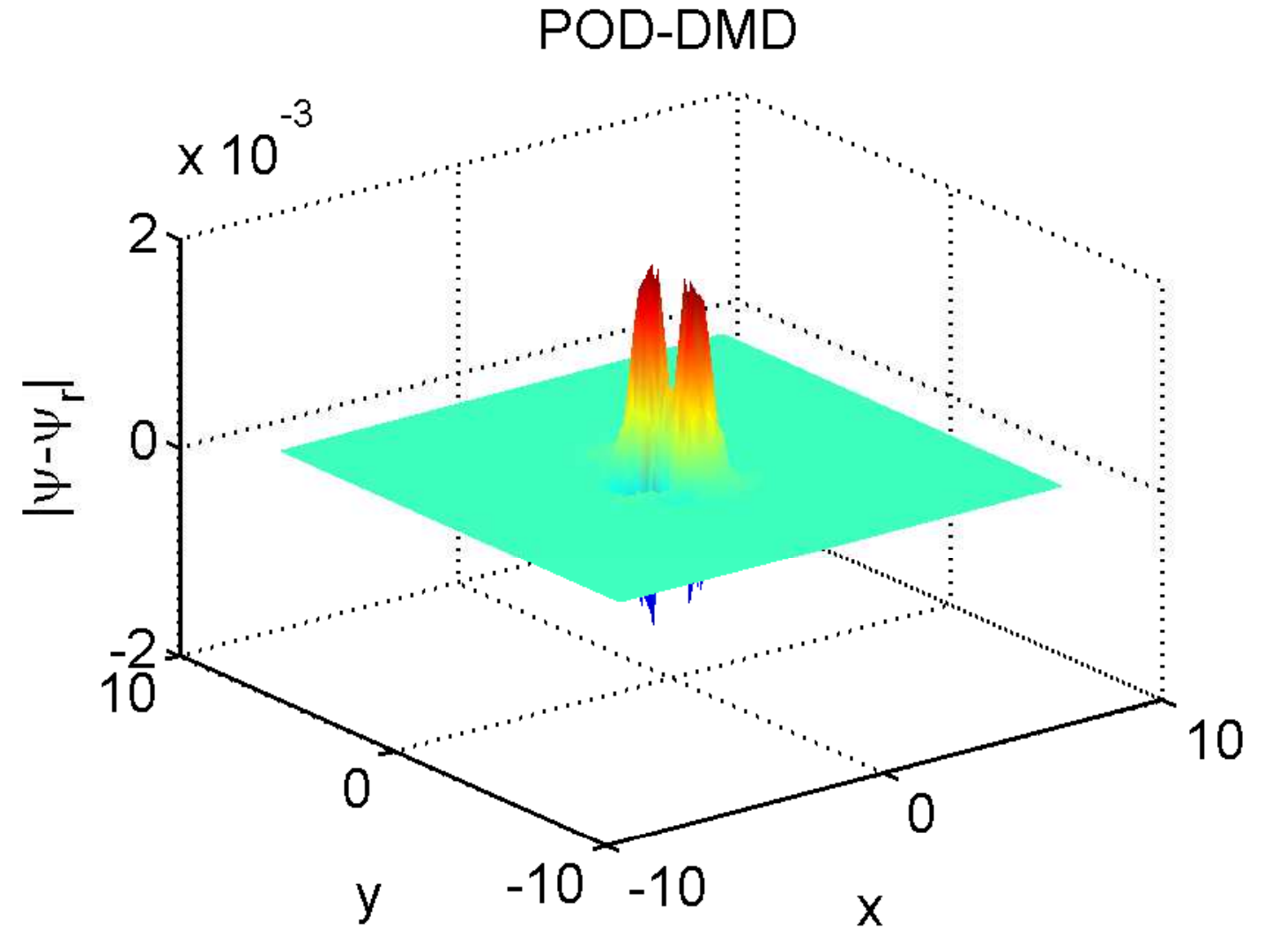}}
\caption{Example~\ref{ex2}: Solution profile by FOM (left), the errors between FOM and ROMs with 20 POD 20 DEIM (middle) and 20 POD and 20 DMD (right) modes, at the final time $T=3$\label{ex2_plots}}
\end{figure}

Again the discrete energy and the discrete mass are well preserved in Fig.~\ref{ex2_energymass} by the FOM and the ROMs.
\begin{figure}[htb!]
\centering
\subfloat{\includegraphics[scale=0.3]{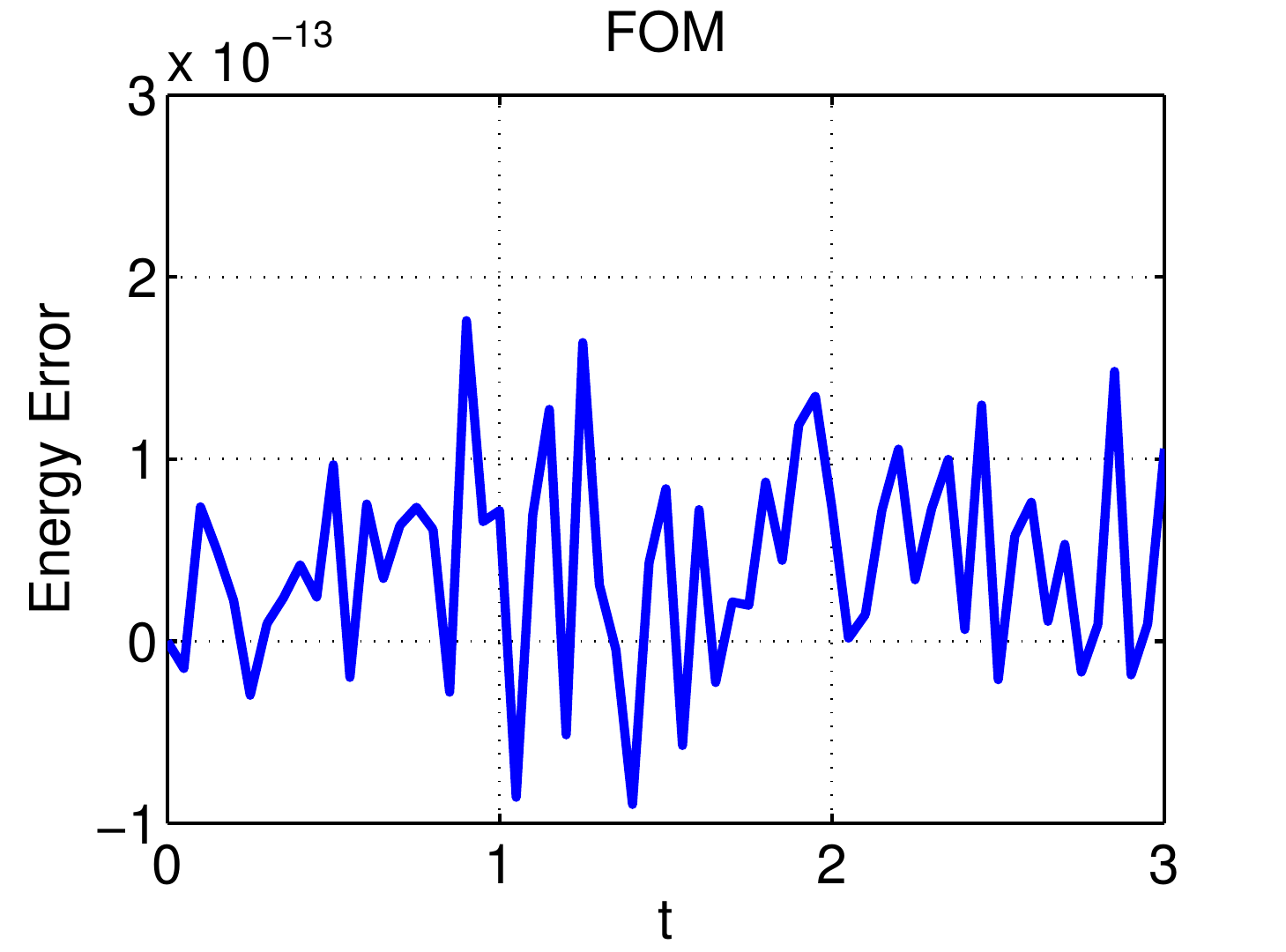}}
\subfloat{\includegraphics[scale=0.3]{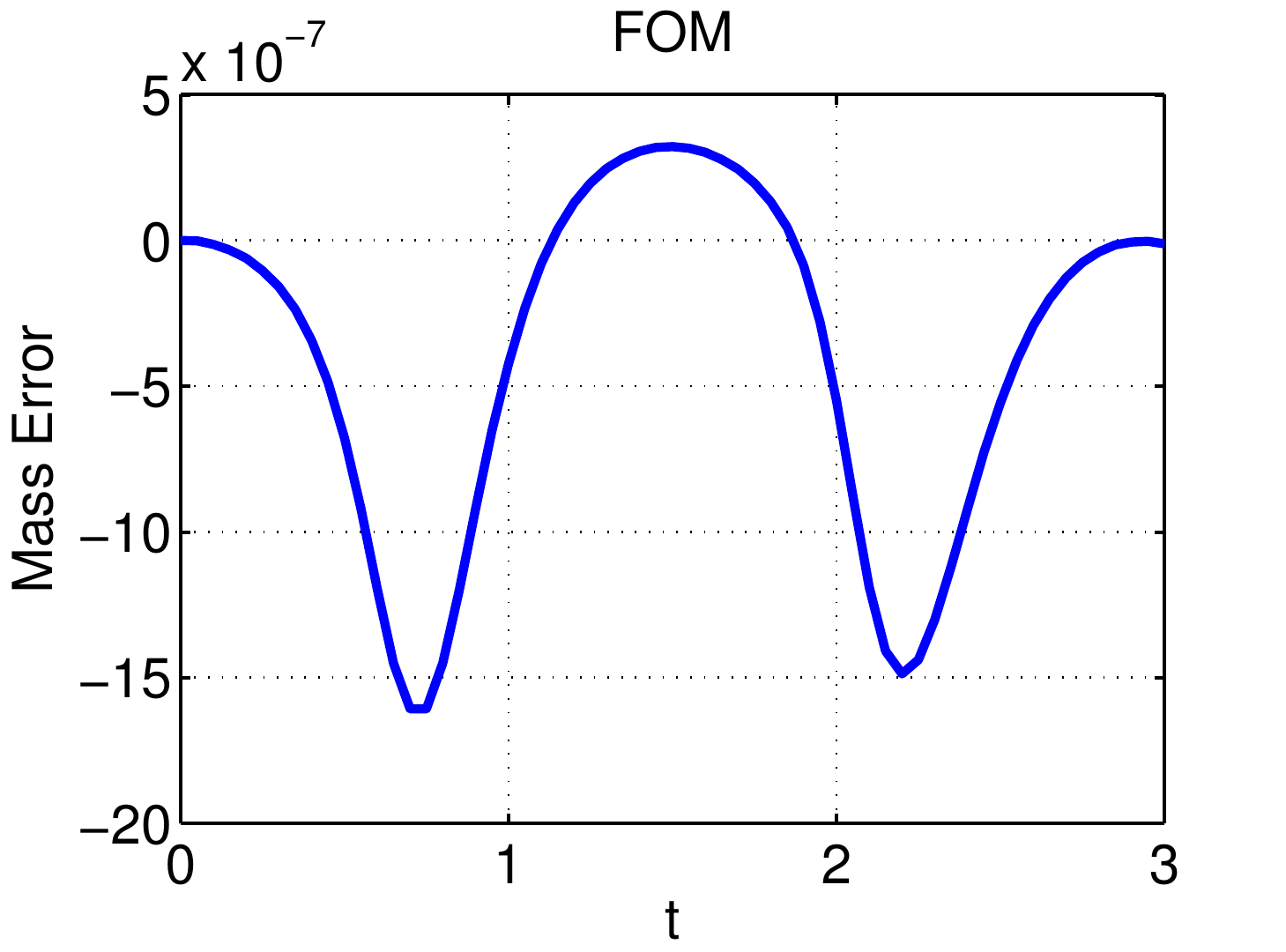}}

\subfloat{\includegraphics[scale=0.3]{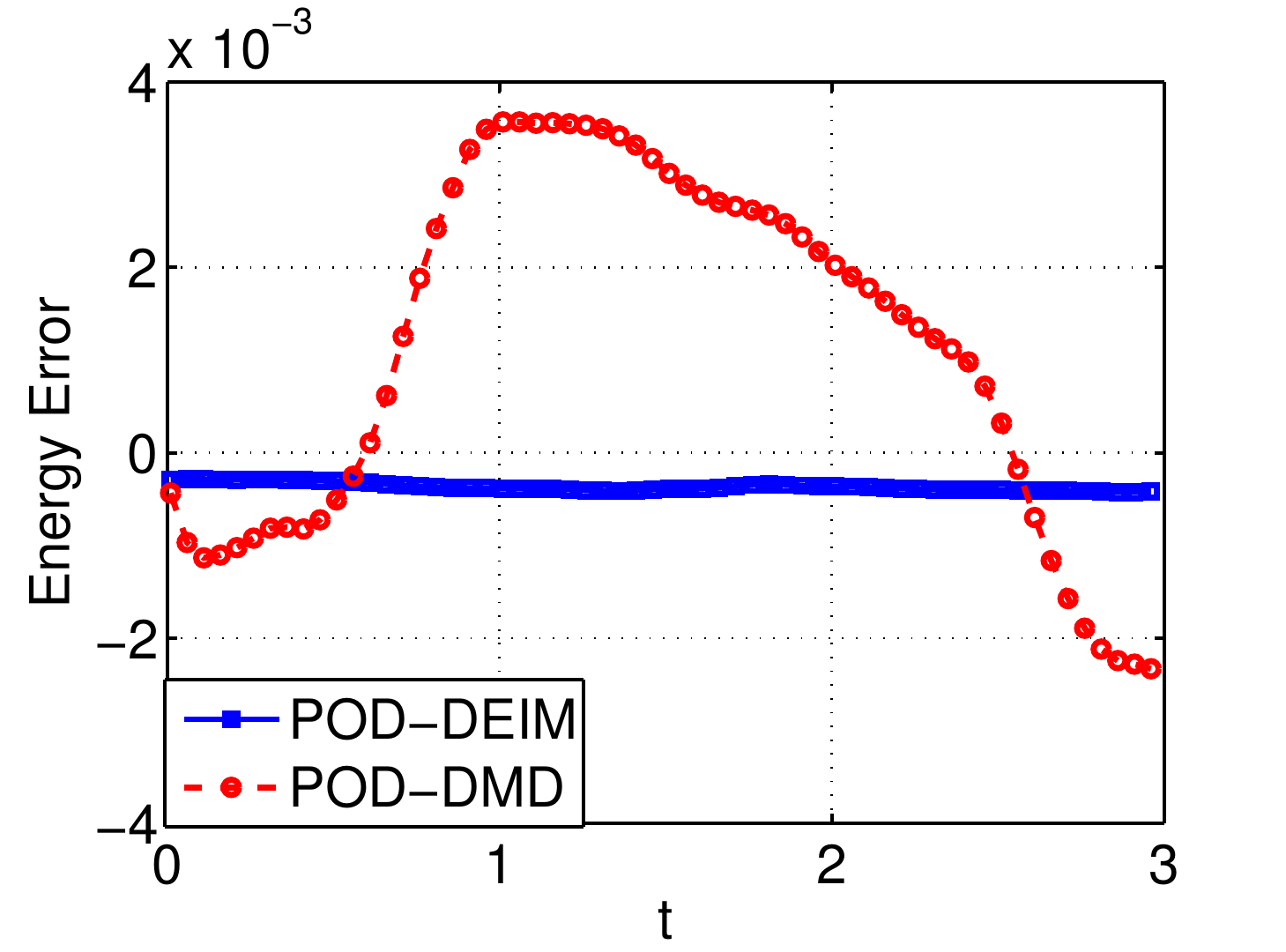}}
\subfloat{\includegraphics[scale=0.3]{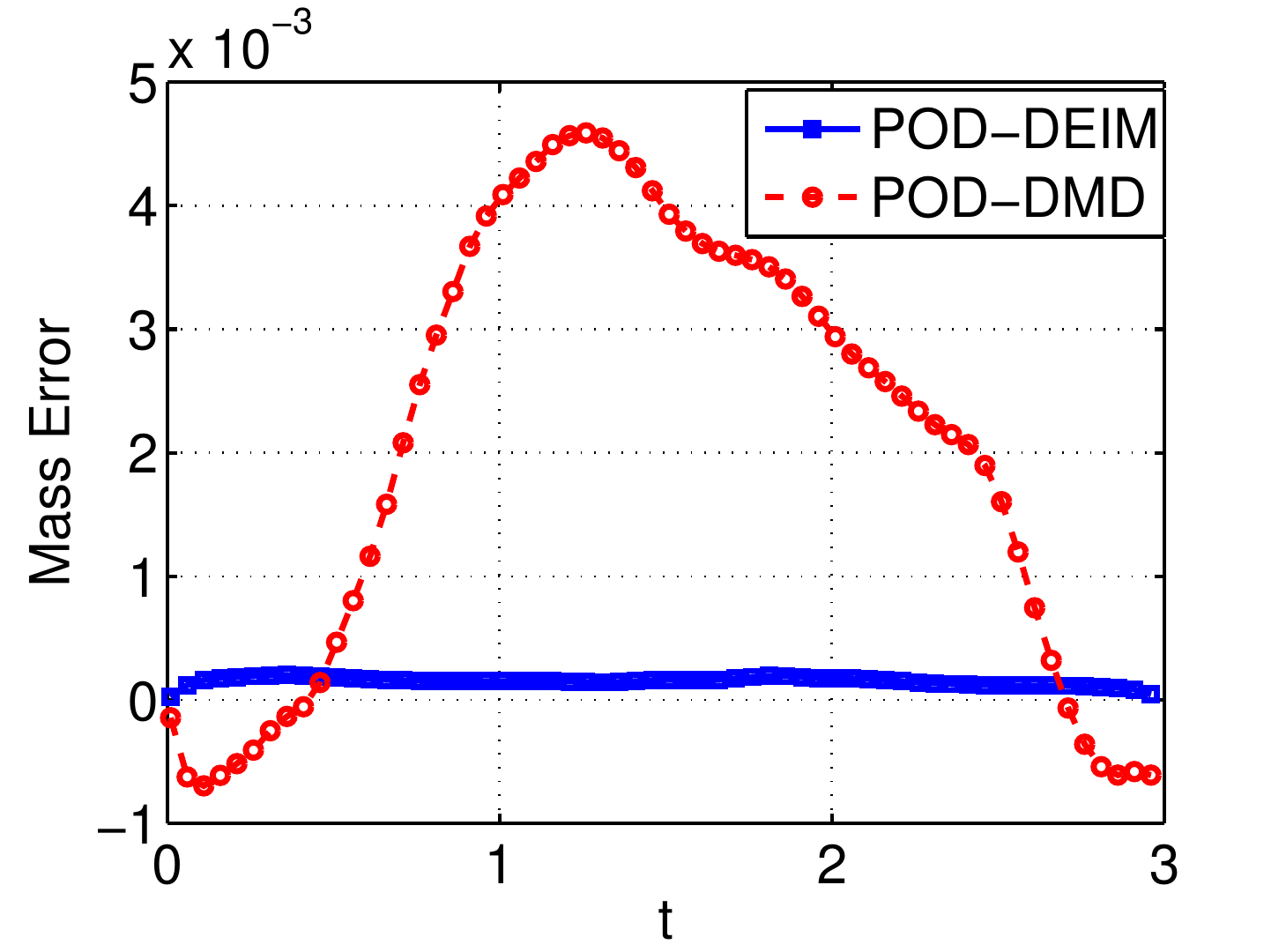}}
\caption{Example~\ref{ex2}: $L^{\infty}$-errors for the energy (left) and the mass (right) between initial ones by the solutions of FOM (top) and ROM (bottom) with 20 POD and 20 DEIM/DMD modes\label{ex2_energymass}}
\end{figure}

The solution, discrete energy and discrete mass errors in Fig.~\ref{ex2_errors} reach a plateau by increasing number of DEIM/DMD modes, and the POD-DEIM is much more accurate than the POD-DMD. The computational efficiency of the ROM with the DMD is clearly seen in Table~\ref{ex2_table}. Similar results are obtained for the one dimensional Burger's equation and NLSE in \cite{Alla16}.
\begin{figure}[htb!]
\centering
\subfloat{\includegraphics[scale=0.3]{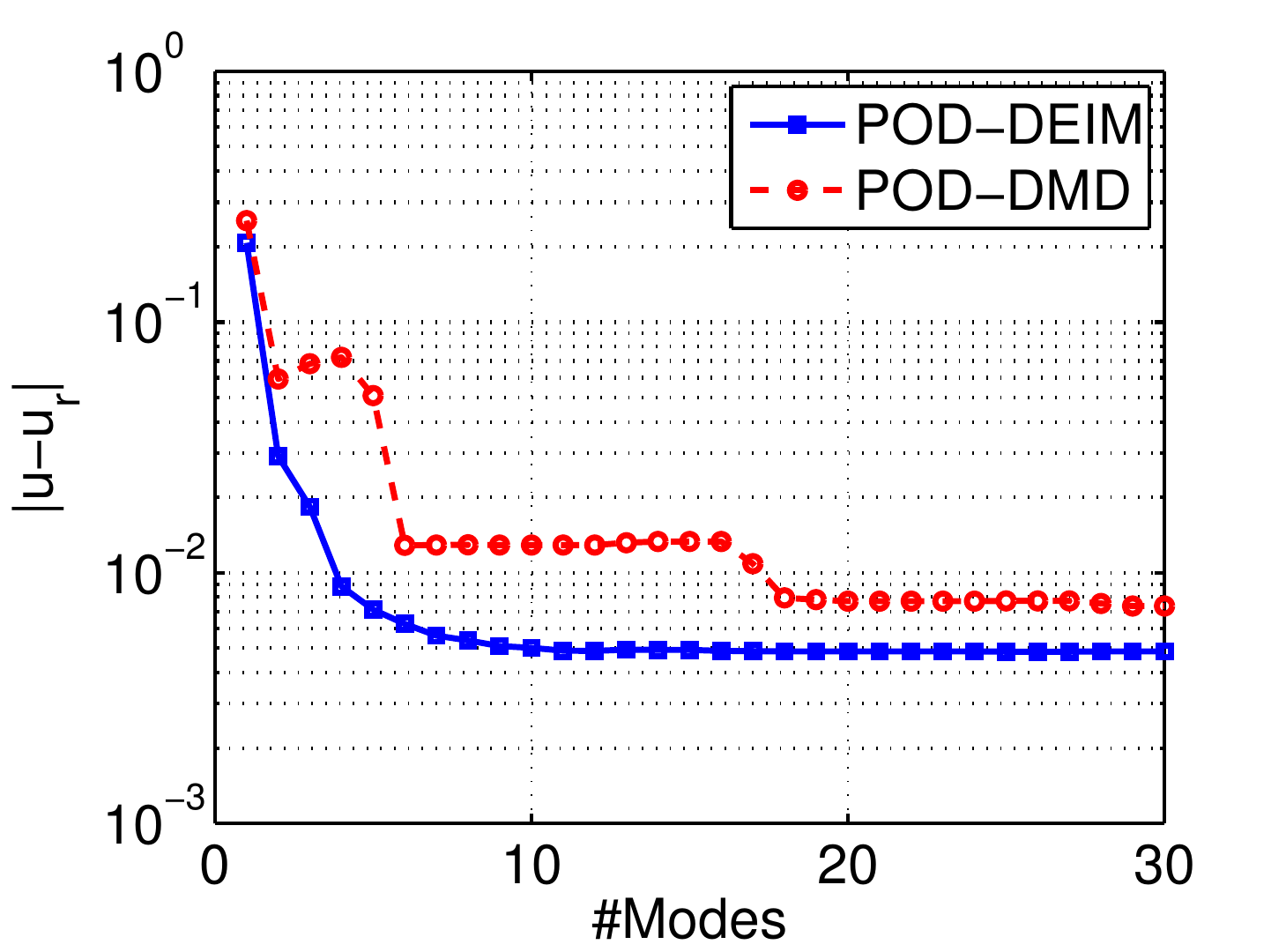}}
\subfloat{\includegraphics[scale=0.3]{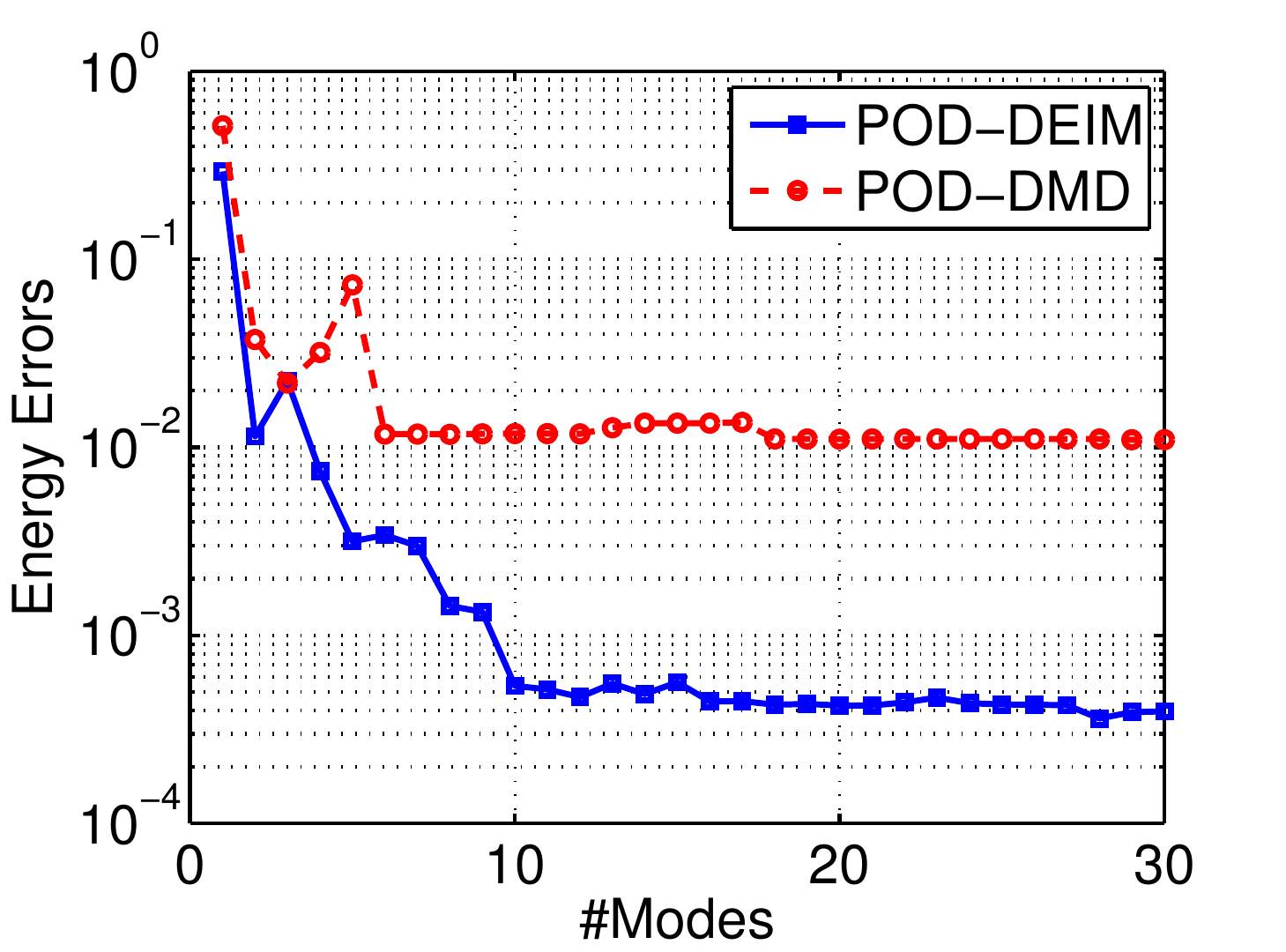}}
\subfloat{\includegraphics[scale=0.3]{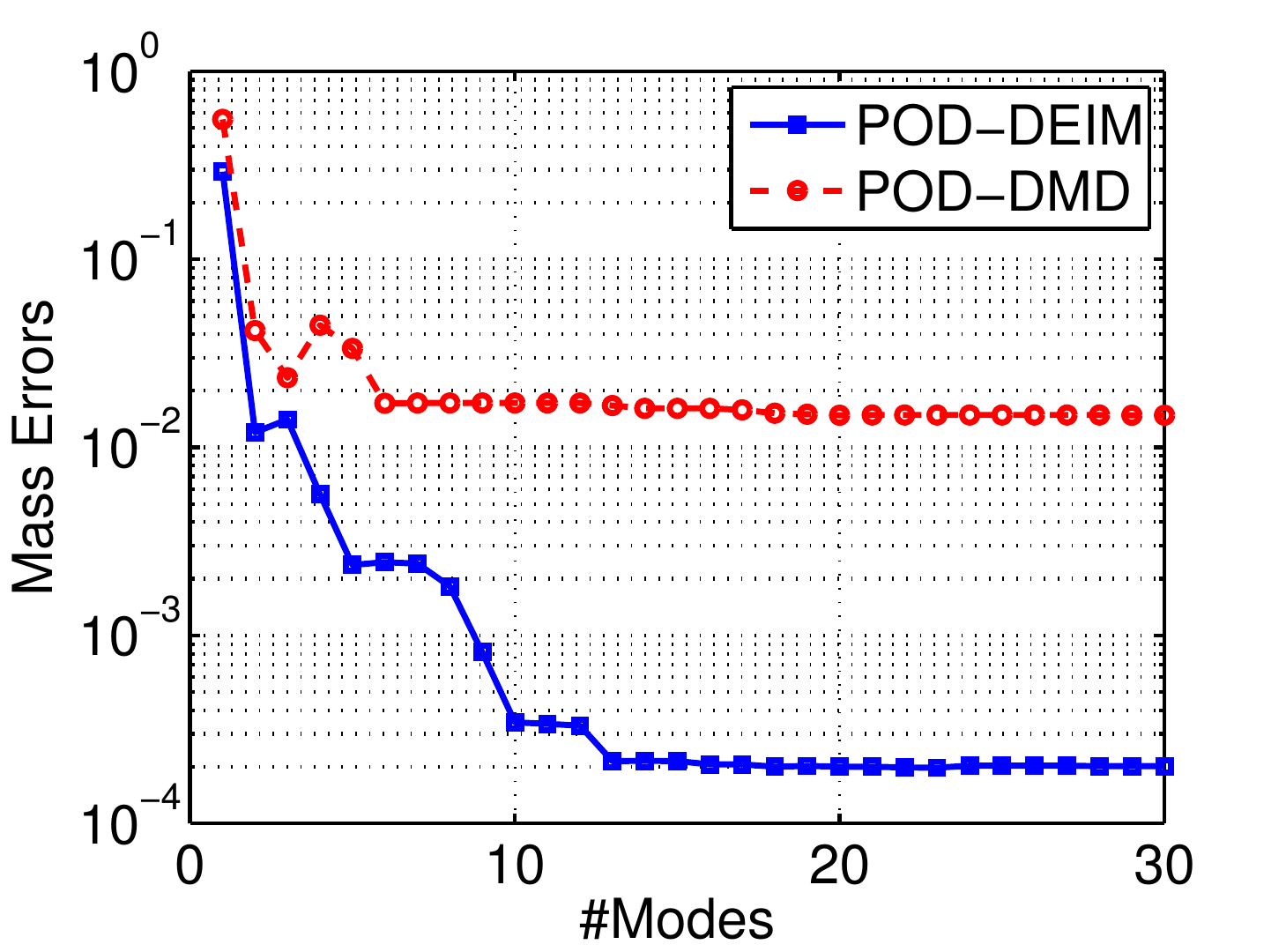}}
\caption{Example~\ref{ex2}: Relative $L^2$-$L^2$-errors for the solutions (left), and relative $L^\infty$-errors for the discrete energy (middle) and the discrete mass (right) by increasing number of DEIM/DMD modes, with 20 POD modes fixed\label{ex2_errors}}
\end{figure}

\begin{table}[H]
\centering
\caption{Example~\ref{ex2}: The computation time (in sec.) and speed-up factors for 20 POD/DEIM/DMD modes}
\label{ex2_table}
\begin{tabular}{ l c c }
   & CPU Time (sec.) & Speedup \\
\hline
FOM            & 309.7 & -  \\
ROM with DEIM  & 27.5 &  11.3   \\
ROM with DMD   & 0.3&  956.7  \\
\hline
\end{tabular}
\end{table}

\section{Conclusions}

We have performed a comparative study using the energy preserving ROM applied to 2D NLSE, showing the numerical efficiency and accuracy of the reduced order approximations and the complexity reduction of the nonlinear terms with DEIM and DMD. The discrete energy and  the discrete mass preservation of POD-DEIM and POD-DMD reduced order models have been shown to be accurate and efficient for capturing the spatio-temporal dynamics of the 2D NLSE  with substantial reduction in both dimension and computational time. This is clearly demonstrated in numerical simulations by the comparative computation times (speed-ups) and relative errors of the reduced order systems with respect to FOMs. The POD-DMD is always faster and the POD-DEIM is in general more accurate.\\

\noindent {\bf Acknowledgments:}
The authors would like to thank the reviewers for the comments and suggestions that
helped to  improve the manuscript.


\end{document}